\numberwithin{equation}{section}
\newcommand{\rG}{{\rm G}}
\newcommand{\cC}{\mathcal{C}}
\newcommand{\SO}{{\rm SO}}
\newcommand{\SU}{{\rm SU}}
\newcommand{\GL}{\mathrm{GL}}
\newcommand{\U}{{\rm U}}
\newcommand{\Ad}{\mathrm{Ad}}
\renewcommand{\epsilon}{\varepsilon}
\newcommand{\Hom}{{\mathrm{Hom}}}
\newcommand{\delbar}{\bar \del}
\newcommand{\del}{\partial}
\newcommand{\diag}{\mathrm{diag}}
\newcommand{\dvol}{\mathop\mathrm{dvol}\nolimits}
\newcommand{\id}{\mathrm{id}}
\renewcommand{\Im}{\mathop{\mathrm{Im}}}
\renewcommand{\Re}{\mathop{\mathrm{Re}}}
\def\<{\mathopen{}\left<}
\def\>{\right>\mathclose{}}
\def\({\mathopen{}\left(}
\def\){\right)\mathclose{}}
\newtheorem{theorem}{Theorem}
\newtheorem{corollary}{Corollary}
\newtheorem{definition}{Definition}
\newtheorem{example}{Example}
\newtheorem{proposition}{Proposition}
\newtheorem{remark}{Remark}
\numberwithin{equation}{section}
\title{The DT-instanton equation on almost Hermitian $6$-manifolds}
\author{Gavin Ball}
\address{\textsc{Universit\'{e} du Qu\'{e}bec \`{a} Montr\'{e}al}, 	\textsc{D\'{e}partement de math\'{e}matiques}, \textsc{Case postale 8888, succursale centre-ville, Montr\'{e}al (Qu\'{e}bec), H3C 3P8, Canada}}
\email{gavin.ball@cirget.ca}
\urladdr{https://www.gavincfball.com/} 
\author{Gon\c{c}alo Oliveira}
\address{Universidade Federal Fluminense IME--GMA, Niter\'oi, Brazil}
\email{galato97@gmail.com}
\begin{document}
\maketitle

\begin{abstract}
	This article investigates a set of partial differential equations, the DT-instanton equations, whose solutions can be regarded as a generalization of the notion of Hermitian-Yang-Mills connections. These equations owe their name to the hope that they may be useful in extending the DT-invariant to the case of symplectic $6$-manifolds.
	
	In this article, we give the first examples of non-Abelian and irreducible DT-instantons on non-K\"ahler manifolds. These are constructed for all homogeneous almost Hermitian structures on the manifold of full flags in $\mathbb{C}^3$. Together with the existence result we derive a very explicit classification of homogeneous DT-instantons for such structures. Using this classification we are able to observe phenomena where, by varying the underlying almost Hermitian structure, an irreducible DT-instanton becomes reducible and then disappears. This is a non-K\"ahler analogue of passing a stability wall, which in string theory can be interpreted as supersymmetry breaking by internal gauge fields.
\end{abstract}


\tableofcontents

\section{Introduction}

\subsection{Summary}

The notions of holomorphic bundles and Hermitian Yang Mills connections have proven to be very fruitful in complex geometry. When considering Hermitian vector bundles, the Hitchin-Kobayashi correspondence \cite{D,UY,LT} yields a relation between the algebro-geometric notion of a stable holomorphic vector bundle and the more differential geometric one of a Hermitian-Yang-Mills (HYM) connection. The goal of the present paper is to study some natural generalizations of these objects in almost complex geometry. The most well known of such generalizations are pseudo-holomorphic and pseudo-Hermitian-Yang-Mills (pHYM) connections. In specific situations, these have been studied by several authors, see \cite{Charbonneau2016} and \cite{Bryant2006} for example. The major goal of the current paper is to study a system of partial differential equations whose solutions give a further generalization of the notion of a HYM connection on real $6$-dimensional almost Hermitian manifolds. To the authors' knowledge such equations first appeared in Richard Thomas's thesis \cite{Thomas1997} (page 29). These equations have also independently appeared in the physics literature, for instance in \cite{Baulieu1998,Baulieu1998b,Iqbal2008} and references therein. More recently, the same equations were studied by Yuuji Tanaka (\cite{Tanaka2008}, \cite{Tanaka2013}, \cite{Tanaka2014}) who constructed the only known (nontrivial) examples of solutions in \cite{Tanaka2008}. These rely on a very general version of the Hitchin-Kobayashi correspondence and require the underlying almost Hermitian manifold to actually be K\"ahler. In that direction, our results give the first nontrivial solutions to these equations on non-K\"ahler almost Hermitian manifolds. For instance, one of the examples explored in this paper focuses on $\mathbb{F}_2$, the manifold of full flags in $\mathbb{C}^3$. In that example, we give nontrivial solutions to these equations for several almost Hermitian structures compatible with the nearly K\"ahler almost complex structure.

\subsection{The DT-instanton equations}

Let $(X,g,J)$ be an almost Hermitian manifold, $\rG$ a compact semisimple\footnote{Similar equations to those considered here can be written if $\rG$ not semisimple. However, for the sake of simplifying some statements we shall restrict to this case.} Lie group, and $P \rightarrow X$ a principal $\rG$-bundle. A connection $A$ on $P$ is called pseudo-holomorphic if its curvature $F_A$ is of type $(1,1)$ and pHYM if we further have
\begin{equation}\label{eq:HYM}
\Lambda F_A = 0,
\end{equation}
where $\Lambda F_A = \ast (F_A \wedge \omega^2)$ denotes contraction with respect to the associated $2$-form $\omega(\cdot , \cdot)= g(J \cdot , \cdot)$. These notions are word by word adaptations of the respective notions in the case where $(X,g,J)$ is Hermitian. However, for the general almost Hermitian structure there may not exist (even locally) solutions to these equations, see \cite{Bryant2006}. Similarly, in the non-integrable case there is no analogue of the function theory relating the existence of a (pseudo)-holomorphic connection with local holomorphic framings. When $X$ is real $6$-dimensional, there is a further generalization of the HYM equations which has a better general existence theory. This is an equation for a pair $(A, u)$, consisting of a connection $A$ on $P$ and a Higgs field $u \in \Omega^{0,3}(X ,\mathfrak{g}_P^{\mathbb{C}})$, required to satisfy
\begin{eqnarray}\label{eq:New_Mon1}
F_A^{0,2} & = &  \overline{\partial}_A^* u, \\ \label{eq:New_Mon2}
\Lambda F_A & = &  \ast [ u \wedge \overline{ u} ],
\end{eqnarray}
where $\overline{\partial}_A^* = -\ast \partial_A \ast$ and $\ast$ denotes the $\mathbb{C}$-linear extension of the Hodge-$\ast$ operator associated with $g$. In this paper we shall refer to these as the \emph{DT-instanton equations}, and call a pair $(A,u)$ solving them a \emph{DT-instanton}. The reason for this nomenclature is the point of view adopted in Richard Thomas and Yuuji Tanaka's work, where these equations are regarded as the basis for a possible differential geometric approach to the Donaldson-Thomas invariants, constructed by Thomas for Calabi-Yau $3$-folds using algebraic geometry, see \cite{Thomas1997} and \cite{Thomas2001}. Such a program is still far from being completed, but its success would extend the theory of DT-invariants to symplectic (or even almost Hermitian) real $6$-dimensional manifolds.

As it will become clear in the course of the article, a particular case for these equations happens when $(J,g)$ admits a certain compatible $\SU(3)$-structure. In that situation, the equations can be rewritten as in proposition \ref{prop:New_Mon_Eq}, leading to a simplification of the analysis which can be carried out in several particular cases of interest. For instance, when the $\SU(3)$-structure is Calabi-Yau or nearly K\"ahler, the DT-instanton equations actually reduce to the pHYM ones, see proposition \ref{prop:NKMon}. These vanishing theorems further motivate the DT-instanton equations as being a natural generalization of the HYM ones which, for the generic almost Hermitian structure, we expect to have more solutions then the pHYM equations. To test these ideas, in this paper we solve these equations in very specific examples, namely for invariant almost Hermitian structures on the manifold of full flags in $\mathbb{C}^3$. In particular, we obtain the first examples of DT-instantons on a compact manifold with $\overline{\partial}_A^* u \neq 0$, and these are also the first non-trivial examples on non-K\"ahler manifolds. For future reference, DT-instantons $(A,u)$ with $A$ an irreducible connection and $\overline{\partial}_A^* u \neq 0$ will be called \emph{irreducible}.

\subsection{Main results}

\noindent As mentioned before, the specific examples we shall study focus on $\mathbb{F}_2$, the manifold of full flags in $\mathbb{C}^3$. This is an homogeneous space of the form $\SU(3)/T^2$ and we consider $\SU(3)$-invariant almost Hermitian structures on it. The space of such almost Hermitian structures, which we shall denote by $\mathcal{C}$, has two connected components $\cC^i$, $\cC^{ni}$, both of which can be identified with $\mathbb{R}^3$. These components respectively correspond to those almost Hermitian structures which are compatible with the standard integrable almost complex structure $J^i$ and to one other non-integrable almost complex structure $J^{ni}$ which is in fact compatible with a nearly K\"ahler structure. Indeed, $\mathbb{F}_2$ admits two homogeneous Einstein metrics: the K\"ahler-Einstein one $g^{ke}$ with $(g^{ke},J^{i}) \in \cC^i$; and another one $g^{nk}$ so that $(g^{nk}, J^{ni}) \in \cC^{ni}$ is nearly K\"ahler. For future reference, given $(g,J) \in \cC$ we shall denote by $\omega$ the associated fundamental $2$-form and we note here that for all $(g,J) \in \cC$ the $4$-form $\omega^2$ is exact and thus gives an associated cohomology class on $\mathbb{F}_2$.

In fact, of these two almost complex structures, $J^{ni}$ is the only one with topologically trivial canonical bundle. As a consequence of this observation, for the almost Hermitian structures in $\mathcal{C}^{ni}$ we will be able to simplify the search for solutions to the DT-instanton equations \ref{eq:New_Mon1}--\ref{eq:New_Mon2} by making use of proposition \ref{prop:New_Mon_Eq}. Indeed, in this case we are able to classify $\SU(3)$-invariant DT-instantons with gauge group $\SO(3)$. To state this classification we start with some preparation. Let $r \in (\mathfrak{t}^2)^*$ be an integral weight of $\SU(3)$ and $e^{r}:T^2 \rightarrow \U(1)$ the induced group homomorphism. Then, denote by $\lambda_{r}:T^2 \rightarrow \SO(3)$ the homomorphism obtained by composing $e^{r}$ with the degree one embedding of $\U(1)$ as the maximal torus of $\SO(3)$. This can be used to construct the $\SU(3)$-homogeneous $\SO(3)$-bundles
$$P_r = \SU(3) \times_{(T^2, \lambda_r)} \SO(3).$$
All the $\SU(3)$-homogeneous $\SO(3)$-bundles on $\mathbb{F}_2$ are of this form, and in our first main result we classify $\SU(3)$-invariant DT-instantons on such bundles. As way of preparing for the statement it is important to note that the bundles $P_r$ are all reducible to the $\U(1)$-bundles $L_r$ associated with the homomorphism $e^{r}$. The Hitchin-Kobayashi correspondence, and also our analysis, yields that for $(g,J^i) \in \cC^i$ an irreducible HYM connection on the bundle $P_r$ exists if and only if $\deg(L_r) <0$. In other words, for invariant Hermitian structures $(g,J^i) \in \cC^i$, the quantity $\deg(L_{r})$ controls the existence of HYM connections on the bundle $P_r$. Here, by $\deg(L_{\beta})$, i.e. the degree of $L_{\beta}$, we mean the value of $c_1(L_{\beta}) \cup [\omega^2]$ evaluated against the fundamental class of $(\mathbb{F}_2,J)$, which we regard as a real valued function on both $\cC^i$ and $\cC^{ni}$. Our main result, stated below, shows that, for invariant almost Hermitian structures $(g,J^{ni}) \in \cC^{ni}$, the same quantity controls the existence of invariant DT-instantons on $P_r$.

\begin{theorem}\label{thm:Intro_F2_Main}
	Let $(g, J^{ni}) \in \mathcal{C}^{ni}$ and $(A,u)$ be a $\SU(3)$-invariant DT-instanton on $P_r$. Then, $r$ must be a root of $\SU(3)$ and 
	$$\deg(L_r) \leq 0.$$ 
	Moreover, the resulting DT-instanton is irreducible if and only if strict inequality holds.
\end{theorem}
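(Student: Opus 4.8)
The plan is to exploit the homogeneity of the problem to reduce the DT-instanton equations to an algebraic system on the Lie algebra $\su(3)$, and then analyze that system case by case.

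The plan is to exploit homogeneity to convert the DT-instanton equations into a finite-dimensional algebraic system, and then read off all three assertions from the structure of that system. Concretely, writing $\mathbb{F}_2 = \SU(3)/T^2$ with reductive splitting $\su(3) = \ft^2 \oplus \fm$, I would first invoke Wang's theorem to identify $\SU(3)$-invariant connections on $P_r$ with $T^2$-equivariant linear maps $\fm \to \so(3)$, where $T^2$ acts on $\fm$ through the isotropy representation and on $\so(3)$ through $\Ad \circ \lambda_r$. Under the root-space decomposition of $\fm^{\C}$ and the weight decomposition $\so(3)^{\C} = \C H \oplus \C E_+ \oplus \C E_-$ with $E_{\pm}$ of weight $\pm r$, such an equivariant map is pinned down by finitely many scalars. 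Because $J^{ni}$ has topologically trivial canonical bundle, $\Lambda^{0,3}$ is the trivial homogeneous line bundle and proposition \ref{prop:New_Mon_Eq} applies, so the invariant Higgs field is likewise described by finitely many scalars and the equations \ref{eq:New_Mon1}--\ref{eq:New_Mon2} become algebraic.

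The root condition should fall out of the equivariance constraint together with equation \ref{eq:New_Mon1}. The off-diagonal (that is, $E_{\pm}$-valued) part of the Wang map is forced to vanish unless the weight $r$ coincides with one of the isotropy weights $\pm \alpha_i$; and when $r$ is not a root the invariant connection necessarily reduces to the canonical connection on $L_r$, so that $F_A^{0,2}$ is forced into a purely abelian form. A short check then shows the remaining abelian configurations cannot solve \ref{eq:New_Mon1}--\ref{eq:New_Mon2} unless they are trivial, leaving only $r \in \{\pm\alpha_1, \pm\alpha_2, \pm(\alpha_1+\alpha_2)\}$.

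For the degree inequality I would compute $\Lambda F_A$ explicitly from the Wang map. For an invariant connection $\Lambda F_A$ is a constant section of the centralizer $\C H$, and by Chern--Weil its $H$-component is a fixed multiple of $\deg(L_r)$. Pairing equation \ref{eq:New_Mon2} against $H$ then equates this multiple of $\deg(L_r)$ with the $H$-component of $\ast[u \wedge \overline{u}]$, as rewritten through proposition \ref{prop:New_Mon_Eq}. The crux is a positivity statement: this right-hand side is a \emph{non-positive} multiple of a squared norm of the irreducibility data, whence $\deg(L_r) \le 0$, with equality precisely when that data vanishes. Matching the vanishing of this data with the reducibility of $A$ together with $\overline{\partial}_A^* u = 0$ then yields the third assertion.

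The main obstacle is the explicit curvature and torsion computation at a general point of $\cC^{ni}$. Away from the nearly K\"ahler locus the underlying $\SU(3)$-structure carries extra torsion, so both $\overline{\partial}_A^*$ and $\ast[u \wedge \overline{u}]$ acquire torsion-dependent terms that couple the diagonal Higgs scalar to the off-diagonal connection parameters; tracking these via the structure constants of $\su(3)$ and the parameters cutting out $\cC^{ni} \cong \R^3$ is where the real work lies. Establishing the correct sign in the positivity step, so that the squared-norm term enters with the sign producing $\deg(L_r) \le 0$ rather than $\ge 0$, is the delicate point on which the whole statement turns.
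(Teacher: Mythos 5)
Your overall route is the same as the paper's: Wang's theorem identifies invariant connections on $P_r$ with $T^2$-equivariant maps $\fm \to \so(3)$; comparing the weight decomposition $\fm \cong \C_{r_1}\oplus\C_{r_2}\oplus\C_{r_3}$ with $\so(3)\cong \R\oplus\C_{r}$ forces $r$ to be a root (otherwise only the canonical connection survives, which is reducible by Ambrose--Singer and lands back in the abelian case of proposition \ref{prop:U(1)_pHYM_Flag}); and the triviality of the canonical bundle of $J^{ni}$ lets one invoke proposition \ref{prop:New_Mon_Eq} to turn \ref{eq:New_Mon1}--\ref{eq:New_Mon2} into the algebraic system \ref{eq:mon1}--\ref{eq:mon2}. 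This is exactly the paper's proposition \ref{prop:Basic_Monopoles_SU(3)}, specialized to $\cC^{ni}$ in theorem \ref{thm:Monopoles_Flag_JnK}.

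However, your stated ``crux'' --- the positivity mechanism --- is misidentified, and as described it would fail. Invariance forces both Higgs fields into the trivial isotypic line, $\Phi_1,\Phi_2 \in \R\, T_1$, so $[\Phi_1,\Phi_2]=0$ and $\ast[u\wedge\overline{u}]$ vanishes \emph{identically}: the right-hand side of \ref{eq:New_Mon2} carries no sign information at all. Likewise, the $H$-component of $\Lambda F_A$ is \emph{not} a fixed multiple of $\deg(L_r)$ by Chern--Weil (an $\SO(3)$-bundle has no first Chern class, and that component is gauge-dependent): the quadratic term $\tfrac12[A\wedge A]$ built from the off-diagonal Wang parameter $a$ contributes $-a^2$ to the $T_1$-part of the curvature, as in the paper's formula $F_1 = \tfrac{i}{\epsilon_1 A_1^2}(1-a^2)\,\alpha_1\wedge\overline{\alpha}_1-\cdots$. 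Run literally, your pairing would therefore conclude $\deg(L_r)=0$ for every invariant solution, which is the wrong statement. The true source of the inequality is precisely that $a^2$ term: the $T_1$-component of $F_A\wedge\omega^2=0$ reads $a^2 = -\tfrac{3}{4}\epsilon_1 A_1^2\,\mu(L_{r_1})$ (equation \ref{eq:a_proof_Monopole_main_thm}), so real solvability forces $\deg(L_r)\le 0$, with equality iff $a=0$, which by Ambrose--Singer is exactly reducibility. So your phrase ``squared norm of the irreducibility data'' is correct in spirit, but that data is the connection's off-diagonal component $a$, not anything built from $u$; the Higgs field enters only afterwards, through equation \ref{eq:mon1}, which forces $\phi_1=0$ and determines $\phi_2$, the compatibility condition $\epsilon_1\epsilon_2-\epsilon_2\epsilon_3+\epsilon_1\epsilon_3=1$ being automatic on $\cC^{ni}$.
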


The previous result, i.e. Theorem \ref{thm:Intro_F2_Main}, is a summarized version of our main result stated as Theorem \ref{thm:Monopoles_Flag_JnK} which constructs these DT-instantons and the functions $\deg(L_r)$ explicitly. In particular, this gives the first existence theorem for solutions of the equations \ref{eq:New_Mon1}--\ref{eq:New_Mon2} with $\overline{\partial}_A^* u \neq 0$, which are also the first nontrivial examples outside the K\"ahler world. 

We must further mention on the appearance of the restriction that the integral weights $r$ are roots of $\SU(3)$ follows in our case from the imposition that the DT-instantons $(A,u)$ be $\SU(3)$-invariant. In fact, we do not expect that this condition must hold for general DT-instantons, but of course we do not know if these exist on other $P_r$.

An immediate consequence of the explicit nature of our formulas for the degrees $\deg(L_r)$ as functions on $\cC^{ni}$ is that it is very easy to check that they cannot all be simultaneously negative. Furthermore, they all vanish if and only if $g=g^{nk}$ is the metric compatible with the nearly K\"ahler structure. This gives the result below, which will be presented in a more detailed manner as corollary \ref{cor:Monopoles_Jnk_Flag}.

\begin{corollary}
	Let $(g, J^{ni}) \in \mathcal{C}^{ni}$ be a $\SU(3)$-invariant almost Hermitian structure compatible with $J^{ni}$. Then, either:
	\begin{itemize}
		\item[(i)] There is a root $r$ together with an irreducible $\SU(3)$-invariant DT-instanton on $P_r$, or
		\item[(ii)] $g=g^{nk}$ is the nearly-K\"ahler metric and there is a reducible pHYM connection on all the bundles $P_r$. In this case, the corresponding connection is reducible to $L_r \hookrightarrow P_r$.
	\end{itemize}
\end{corollary}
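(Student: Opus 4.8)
The plan is to read off the dichotomy from Theorem~\ref{thm:Intro_F2_Main} and the explicit degree functions constructed in Theorem~\ref{thm:Monopoles_Flag_JnK}, using a single structural input: the roots of $\SU(3)$ occur in pairs $\{r,-r\}$, and the assignment $r\mapsto\deg(L_r)$ is \emph{odd}. To justify the oddness I would observe that the weight $-r$ induces the inverse homomorphism $e^{-r}=(e^r)^{-1}$, so $L_{-r}\cong L_r^{-1}$ and $c_1(L_{-r})=-c_1(L_r)$; pairing $c_1$ with the class $[\omega^2]$ then gives $\deg(L_{-r})=-\deg(L_r)$ for every root $r$ and every $(g,J^{ni})\in\cC^{ni}$.

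With this in hand I would split on whether the finitely many numbers $\{\deg(L_r): r\text{ a root}\}$ all vanish. If some $\deg(L_r)\neq 0$, then by oddness exactly one of $\deg(L_r),\deg(L_{-r})$ is strictly negative; calling the corresponding root $r'$, the existence half of Theorem~\ref{thm:Monopoles_Flag_JnK} furnishes an irreducible $\SU(3)$-invariant DT-instanton on $P_{r'}$, which is alternative (i). The same oddness also yields the companion observation quoted in the text: a negative degree is always matched by a positive one, so the degrees can never be simultaneously negative.

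If instead $\deg(L_r)=0$ for every root $r$, I would turn to the explicit formulas. Identifying $\cC^{ni}$ with $\R^3$ through the scales $(a_1,a_2,a_3)$ of $\omega$ on the three irreducible summands, each $[\omega^2]$ --- and hence each $\deg(L_r)$ --- is a homogeneous quadratic in the $a_i$, and I would check that the three essentially distinct such functions vanish simultaneously exactly on the nearly Kähler locus $g=g^{nk}$. This computational verification is the one non-formal step, and the step I expect to be the main obstacle. Granting it, Proposition~\ref{prop:NKMon} collapses the DT-instanton equations to the pHYM equations at $g^{nk}$; and since $\deg(L_r)=0$, the unique invariant $\U(1)$-connection on $L_r$ has constant $\Lambda F$ equal to a multiple of $\deg(L_r)$, hence $\Lambda F=0$, so it is pHYM. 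Pushing this connection forward along the reduction $\U(1)\hookrightarrow\SO(3)$, i.e. along $L_r\hookrightarrow P_r$, produces the reducible invariant pHYM connection asserted in alternative (ii).

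Finally I would note that (i) and (ii) are mutually exclusive precisely because (ii) is the common-zero locus of the degrees, so the two cases are exhaustive. Apart from the single verification that the quadratic degree functions vanish simultaneously only at $g^{nk}$, the whole argument is formal, depending only on the oddness of $r\mapsto\deg(L_r)$ and on the existence results already contained in Theorem~\ref{thm:Monopoles_Flag_JnK}.
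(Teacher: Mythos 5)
There is a genuine gap at the pivotal step of your case (i). The oddness identity $\deg(L_{-r})=-\deg(L_r)$ is of course correct, but the deduction ``$\deg(L_{-r})<0$, hence Theorem~\ref{thm:Monopoles_Flag_JnK} furnishes an irreducible invariant DT-instanton on $P_{-r}$'' misreads that theorem: its existence criteria are stated only for the three fixed representatives $r_1,r_2,r_3$, and they do \emph{not} transport to $-r_i$ by relabelling. Indeed $\lambda_{-\beta}$ is conjugate to $\lambda_{\beta}$ inside $\SO(3)$ (conjugate by the Weyl element $j\in\SU(2)/\mathbb{Z}_2$, whose adjoint action sends $T_1\mapsto -T_1$), so the constant gauge transformation $\Ad(j)$ gives an $\SU(3)$-equivariant isomorphism $P_{-r_i}\cong P_{r_i}$ carrying invariant connections to invariant connections and DT-instantons to DT-instantons. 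Consequently the invariant existence question on $P_{-r_i}$ is \emph{literally the same} as on $P_{r_i}$: running Wang's theorem on $P_{-r_i}$ (the extra invariant connections again come from the root space $\mathfrak{m}_i$, now with a conjugated intertwiner, which flips the sign of both the canonical curvature and the $[A\wedge A]$ correction) reproduces exactly the equation $a^2=-\tfrac{3}{4}\epsilon_i A_i^2\,\mu(L_{r_i})$ of Proposition~\ref{prop:Basic_Monopoles_SU(3)}. So an irreducible invariant DT-instanton exists on $P_{-r_i}$ if and only if $\mu(L_{r_i})<0$, i.e.\ if and only if $\mu(L_{-r_i})>0$ --- the opposite of what your oddness mechanism asserts. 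Were your principle true, every bundle $P_{r_i}$ would carry an irreducible invariant DT-instanton whenever $\deg(L_{r_i})\neq 0$ (by choosing whichever sign of the weight is negative), contradicting the classification and the wall-crossing phenomenon the paper emphasizes, in which the instanton on a \emph{fixed} bundle becomes reducible and disappears as its degree crosses zero.

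The correct structural input is not oddness over the pairs $\{r,-r\}$ but the linear relation $r_1+r_2+r_3=0$ among the paper's chosen representatives, which by linearity of $\beta\mapsto\deg(L_\beta)$ forces $\deg(L_{r_1})+\deg(L_{r_2})+\deg(L_{r_3})=0$. This is precisely what the paper's proof exploits: writing $\mu(L_{r_i})<0$ as $A_i^2A_j^2+A_i^2A_k^2<2A_j^2A_k^2$ and adding the three reversed inequalities yields an immediate contradiction, so the three degrees cannot all be positive. Hence either some $\mu(L_{r_i})<0$ among $i\in\{1,2,3\}$, and Theorem~\ref{thm:Monopoles_Flag_JnK} applies \emph{as stated} to give your alternative (i) (this also rescues your conclusion in the problematic case: if $\deg(L_{r_1})>0$ then $\deg(L_{r_2})+\deg(L_{r_3})<0$, so one of the other two chosen roots works); or all three degrees vanish, and solving the equalities $A_i^2A_j^2+A_i^2A_k^2=2A_j^2A_k^2$ gives $A_1^2=A_2^2=A_3^2$, the nearly K\"ahler metric up to scale. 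Your treatment of case (ii) is then sound and matches the paper's appeal to Proposition~\ref{prop:U(1)_pHYM_Flag} and Corollary~\ref{cor:U(1)_pHYM_Flag}; note only that the step you flag as the ``main obstacle'' is a two-line elimination, and that the degree functions are homogeneous quartics, not quadratics, in the $A_i$ (harmless for the logic).
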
 

In fact, one can explicitly pick a $1$-parameter family of compatible $2$-forms $\lbrace \omega_s \rbrace_{s \in I \subset \mathbb{R}}$ so that for some root $\deg(L_r)(\omega_s)$ crosses zero. Then, one can check that as $\deg(L_r)(\omega_s) \searrow 0$, the DT-instanton $(A,u)$ constructed by theorem \ref{thm:Intro_F2_Main} become obstructed and reducible. See examples \ref{ex:Mon_1}--\ref{ex:Mon_2} and the accompanying figures for illustrations of this phenomena. From a Physics point of view, this phenomenon can be interpreted as analogous to crossing a wall from the supersymmetric region in $\cC^{ni}$ to a non-supersymmetric one. Namely, it can be thought of as analogous to supersymmetric breaking by internal gauge fields. See \cite{Anderson2009} for the description of this phenomenon in the K\"ahler case.

In section \ref{sect:DTSO3} we turn to the problem of classifying the $\SU(3)$-invariant pHYM connections with gauge group $\SO(3)$. In theorem \ref{thm:HYM_Integrable_Flag} we prove that the existence of such irreducible pHYM connections requires the almost complex structure to be integrable and we write down the resulting HYM connections explicitly.

\subsubsection*{Acknowledgements}

We would like to thank Benoit Charbonneau, G\"ael Cousin and Lorenzo Foscolo for helpful conversations regarding this article. We are particularly thankful for Benoit Charbonneau's comments and carefully reading a previous version of this article.\\
Gon\c{c}alo Oliveira is supported by Funda\c{c}\~ao Serrapilheira 1812-27395, by CNPq grants 428959/2018-0 and 307475/2018-2, and FAPERJ through the program Jovem Cientista do Nosso Estado E-26/202.793/2019.

\section{Preliminaries}

\subsection{Almost Hermitian 6-manifolds and $\SU(3)$-structures}

An almost Hermitian $6$-manifold is a triple $(X^6,J,g)$, where $X$ is real $6$-dimensional smooth manifold equipped with an almost complex structure $J$ and a compatible Riemannian metric $g$, i.e $g ( J \cdot , J \cdot)= g ( \cdot , \cdot)$. In this situation, the associated $(1,1)$-form is given by $\omega ( \cdot , \cdot ) = g ( J \cdot , \cdot)$. The pair $(J,g)$ determines a reduction of the structure group of the frame bundle of $X$ from $\GL(6, \mathbb{R})$ to $\U(3)$. Thus, when convenient, we refer to the pair $(J,g)$ as an $\U(3)$-structure. In this manner, a $\SU(3)$-structure compatible with a given $(J,g)$ consists of the extra data of a real $3$-form $\Omega_1$ satisfying
$$\omega \wedge \Omega_1 =0 , \quad \omega^3 = - \frac{3}{2} \Omega_1 \wedge \Omega_2 , $$
where $\Omega_2=J\Omega_1= \ast \Omega_1$. In particular, the complex valued $3$-form $\Omega = \Omega_1 + i \Omega_2$ is of type $(3,0)$ with respect to $J$. Indeed, such a triple $(J,g,\Omega_1)$ determines a reduction of the structure group of the frame bundle to $\SU(3)$. In order to settle on the nomenclature we now recall various different kinds of $\SU(3)$-structures we will be considering

\begin{definition}\label{def:SU(3)_Structures}
	A $\SU(3)$-structure $(J,g,\Omega_1)$, or equivalently $(J,\omega,\Omega_1)$, will be called
\begin{enumerate}
\item[(a)] Calabi-Yau if $d \omega = 0 = d (\Omega_1 + i \Omega_2) $;
\item[(b)] Nearly Calabi-Yau if $d \omega=0=d \Omega_2$;
\item[(c)] Nearly K\"ahler if $d \omega = 3 \Omega_1 $, and $d \Omega_2 = -2 \omega^2$;
\item[(d)] Half-flat if $d \omega^2 =0=d \Omega_1$.
\end{enumerate}
\end{definition} Notice that a Calabi-Yau structure is also nearly Calabi-Yau and half-flat, while a nearly K\"ahler structure is also half-flat.

\begin{remark}
Consider the product $S^1_{\theta} \times X$ equipped with the $\mathrm{G_2}$-structure $\varphi = d\theta \wedge \omega + \Omega_2$ (and the orientation such that $\psi=- d \theta \wedge \Omega_1 + \frac{1}{2} \omega^2$. This $\mathrm{G_2}$-structure is closed if and only if the $\SU(3)$-structure is nearly Calabi-Yau, and coclosed if and only if it is half-flat. This justifies our choice of $\Omega_2$ rather than $\Omega_1$ in the definition of nearly Calabi-Yau.
\end{remark}

\subsection{Pseudo-holomorphic and pHYM connections}

Throughout this paper $G$ will be a compact Lie group, with Lie algebra $\mathfrak{g}$, and $P$ a principal $G$-bundle over $X$. The adjoint bundle of $P$ will be denoted by $\mathfrak{g}_P$. Recall, for example from \cite{Bryant2005}, that

\begin{definition}
Let $A$ be an Hermitian connection on $P$ and $F_A$ denote its curvature. Then, $A$ is said to be pseudo-holomorphic if 
\begin{equation}\label{eq:holomorphic}
F_A^{0,2} =0,
\end{equation}
and pseudo Hermitian-Yang-Mills (pHYM), if
\begin{eqnarray}\label{eq:HYM1}
F_A^{0,2} & = & 0 \\ \label{eq:HYM2}
\Lambda F_A & = & \lambda C ,
\end{eqnarray}
where $\Lambda F_A = \ast (F_A \wedge \omega^2)$, $\lambda \in \mathbb{R}$ is a constant, and $C$ is a constant central element.
\end{definition}

When $(X,J,g)$ is an Hermitian manifold, the notions of pseudo-holomorphic and pHYM connections obviously coincide with the usual ones of holomorphic and HYM connections respectively. This motivates the definitions above in the setting of general almost Hermitian structures where much less is known. See for example \cite{Charbonneau2016,Bryant2005}, and references therein, for the case of nearly K\"ahler manifolds.

\begin{remark}\label{rem:HYM_In_Nearly_Kahler}
For a nearly K\"ahler structure $(X, \Omega_1, \omega)$ any pseudo-holomorphic connection $A$ is immediately pHYM. Indeed, the curvature $F_A$ of a pseudo holomorphic connection $A$ is of type $(1,1)$. Hence, for any compatible $\SU(3)$-structure $F_A \wedge \Omega $ vanishes. Thus, for a nearly K\"ahler structure $(\omega, \Omega_1)$ we have $F_A \wedge \Omega_2 =0$ and differentiating this equation gives $F_A \wedge \omega^2 =0$, so $A$ is also pHYM. 
\end{remark}

\subsection{DT-instantons}

The conclusion of remark \ref{rem:HYM_In_Nearly_Kahler} is a shadow of the first of the following two interesting facts about nearly K\"ahler structures, see \cite{Bryant2005,Foscolo2016,Verbitsky2011}:
\begin{itemize}
	\item[(a)] All closed $2$-forms of type $(1,1)$ are primitive, i.e satisfy $F \wedge \omega^2 =0$. This follows from the computation in remark \ref{rem:HYM_In_Nearly_Kahler} by replacing the curvature by any such $2$-form.
	
	
	\item[(b)] All closed, $2$-forms of type $(1,1)$ are harmonic. This follows easily from the previous bullet and the fact that any primitive $(1,1)$-form satisfies $\ast F = - F \wedge \omega$, hence
	$$d \ast F = - F \wedge d \omega = -3F \wedge \Omega_1=0,$$
	as $F$ is of type $(1,1)$.
\end{itemize}

Robert Bryant identified in \cite{Bryant2005} a class of Hermitian-structures on a $6$-manifold for which closed $2$-forms of type $(1,1)$ are primitive. Bearing in mind the previous remark, it would not be surprising if they had a good local existence theory for pseudo-holomorphic and pHYM connections. Indeed, these structures are called quasi-integrable and were shown in \cite{Bryant2005} to locally admit as many pseudo-holomorphic and pHYM connections as the integrable ones.

However, for the general almost Hermitian structure, the pHYM equations are overdetermined modulo gauge. Motivated by this we shall now introduce an elliptic equation (modulo gauge) whose solutions we regard as generalizing the notion of pHYM equation to the generic almost Hermitian structure. 

\begin{definition}
A pair $(A, u)$ where $A$ is a connection on $P$ and $u \in \Omega^{0,3}(X ,\mathfrak{g}_P^{\mathbb{C}})$ is called a DT-instanton if
\begin{eqnarray}\label{eq:New_Mon1}
F_A^{0,2} & = &  \overline{\partial}_A^* u, \\ \label{eq:New_Mon2}
\Lambda F_A & = &  \ast [ u \wedge \overline{ u} ],
\end{eqnarray}
where $\overline{\partial}_A^* = -\ast \partial_A \ast$ and $\ast$ denotes the $\mathbb{C}$-linear extension of the Hodge-$\ast$ operator. A DT-instanton $(A,u)$ will be called irreducible if the connection $A$ is irreducible.
\end{definition}

To the authors' knowledge, these equations first appeared in \cite{Thomas1997} and were considered in this set-up already in \cite{Donaldson2009}. They have been studied by Donaldson-Segal \cite{Donaldson2009} and Yuuji Tanaka \cite{Tanaka2008}, \cite{Tanaka2013}, \cite{Tanaka2014} who suggest these equations as a possible analytic approach to DT-invariants. On noncompact Calabi-Yau manifolds they were studied by the second named author in \cite{Oliveira2014} and \cite{Oliveira2016}. The same equations have also been studied by physicists such as in \cite{Baulieu1998} and \cite{Iqbal2008}.

\section{DT-instantons in special cases}

In this subsection we study the DT-instanton equations in several particular cases. We give a few vanishing theorems which further motivate the view of the DT-instanton equations as a generalization of the HYM equations.

\subsection{On Hermitian manifolds}

If $X$ is compact and $J$ integrable, then any DT-instanton induces a holomorphic structure on any associated complex vector bundle. Indeed, it follows from the fact that $u$ is of type $(0,3)$ and the Bianchi identity that
$$\Delta_{\overline{\partial}_A} u = \overline{\partial}_A \overline{\partial}_A^* u = \overline{\partial}_A F_A^{0,2} =0.$$
Taking the inner product with $u$ and integrating by parts we get that $\Vert \overline{\partial}_A^* u \Vert^2_{L^2} =0$. Thus $F_A^{0,2}=0$ and so $(A,u)$ solves 
\begin{align}\label{eq:J_Integrable}
F_A^{0,2} & =   0, \nonumber \\ 
\overline{\partial}_A^* u & =  0, \\  \nonumber
\Lambda F_A & =   \ast [ u \wedge \overline{ u} ],
\end{align}
In particular, this proves the following. 

\begin{proposition}
	If $X$ is compact and $J$ integrable, then any DT-instanton $(A, u)$ solves \ref{eq:J_Integrable}. In particular, $A$ induces an holomorphic structure on any associated complex vector bundle.
\end{proposition}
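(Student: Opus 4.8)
The plan is to exploit the fact that on a complex $3$-fold the bundle of $(0,3)$-forms sits in top antiholomorphic degree, so that the Higgs field $u$ is automatically $\overline{\partial}_A$-closed, and then to run a Hodge-theoretic argument which forces $\overline{\partial}_A^* u$ to vanish. Once that is in hand, the first DT-instanton equation immediately upgrades $F_A^{0,2} = \overline{\partial}_A^* u$ to $F_A^{0,2}=0$.

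First I would record that, since $u \in \Omega^{0,3}(X, \mathfrak{g}_P^{\mathbb{C}})$ and there are no nonzero $(0,4)$-forms on a complex $3$-fold, one has $\overline{\partial}_A u = 0$. Consequently the $\overline{\partial}_A$-Laplacian reduces to $\Delta_{\overline{\partial}_A} u = \overline{\partial}_A \overline{\partial}_A^* u$, the term $\overline{\partial}_A^* \overline{\partial}_A u$ dropping out. The crucial move is to substitute the first DT-instanton equation \eqref{eq:New_Mon1}, namely $\overline{\partial}_A^* u = F_A^{0,2}$, to rewrite this as $\Delta_{\overline{\partial}_A} u = \overline{\partial}_A F_A^{0,2}$.

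Next I would argue that $\overline{\partial}_A F_A^{0,2} = 0$. This is precisely where integrability of $J$ enters: when $J$ is integrable the operator $\overline{\partial}_A$ acts on sections of $\mathfrak{g}_P^{\mathbb{C}}$-valued forms with $\overline{\partial}_A^2 = F_A^{0,2}\wedge(\cdot)$, and decomposing the Bianchi identity $d_A F_A = 0$ by $(p,q)$-type isolates the $(0,3)$-component as $\overline{\partial}_A F_A^{0,2} = 0$. Hence $u$ is $\overline{\partial}_A$-harmonic. Pairing $\Delta_{\overline{\partial}_A} u = 0$ with $u$ in $L^2$ and integrating by parts — legitimate since $X$ is compact — yields $\Vert \overline{\partial}_A^* u \Vert_{L^2}^2 = 0$, so $\overline{\partial}_A^* u = 0$, and then \eqref{eq:New_Mon1} gives $F_A^{0,2} = 0$. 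Thus $(A,u)$ satisfies the refined system \eqref{eq:J_Integrable}, the second equation being carried along unchanged.

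For the final clause I would invoke the Koszul--Malgrange integrability theorem: on a complex manifold a unitary connection whose curvature has vanishing $(0,2)$-part endows any associated complex vector bundle with a holomorphic structure for which $\overline{\partial}_A$ is the Dolbeault operator. I expect the only genuinely delicate step to be the vanishing $\overline{\partial}_A F_A^{0,2} = 0$, since this is exactly the identity that fails for non-integrable $J$ — where extra Nijenhuis-type terms couple $F_A^{0,2}$ to the other curvature components — and is precisely what makes the DT-instanton equations strictly more general than the pHYM equations in that setting; everything else is formal Hodge theory on a compact manifold.
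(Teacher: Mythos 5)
Your proof is correct and follows essentially the same route as the paper: both arguments use that $u$ has top antiholomorphic degree so $\Delta_{\overline{\partial}_A} u = \overline{\partial}_A \overline{\partial}_A^* u = \overline{\partial}_A F_A^{0,2}$, which vanishes by the $(0,3)$-component of the Bianchi identity (exactly where integrability of $J$ enters), and then integration by parts on the compact $X$ gives $\Vert \overline{\partial}_A^* u \Vert_{L^2}^2 = 0$, hence $F_A^{0,2}=0$. Your explicit appeal to the Koszul--Malgrange theorem for the final clause simply makes precise what the paper leaves implicit.
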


In fact, notice that as $u$ is of type $(0,3)$ we have that $\overline{\partial}_A^* u = - \ast \partial_A \ast u = -i \ast \partial_A u$, so that $\overline{u} \in H^0(X, K_X \otimes \mathfrak{g}^{\mathbb{C}}_P)$, where $\mathfrak{g}^{\mathbb{C}}_P =\mathfrak{g}_P \otimes_{\mathbb{R}} \mathbb{C}$.

\subsection{On K\"ahler manifolds}

For K\"ahler manifolds we can prove that if the scalar curvature is positive then $u=0$ and the equations \ref{eq:J_Integrable} coincide with the HYM equations. We state this as follows.

\begin{proposition}
	Let $X$ be compact and $\omega$ be a K\"ahler metric with nonnegative scalar curvature $s \geq 0$ on $X$. Then, any DT-instanton $(A,u)$ on $(X, \omega)$ has $A$ being HYM and $u =0$.
\end{proposition}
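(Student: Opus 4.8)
The plan is to use the preceding proposition to reduce to the integrable system and then run a Bochner argument on the Higgs field. Since $\omega$ is Kähler the almost complex structure $J$ is integrable, so by the preceding proposition every DT-instanton $(A,u)$ already solves \eqref{eq:J_Integrable}; in particular $F_A^{0,2}=0$, $\overline{\partial}_A^*u=0$ and $\Lambda F_A=\ast[u\wedge\overline u]$, and moreover $\overline u$ is a holomorphic section of $K_X\otimes\mathfrak{g}_P^{\mathbb{C}}$ for the holomorphic structure induced by $A$. The idea is to show that $|u|^2$ is subharmonic, conclude it is constant, and read off the vanishing of each nonnegative term.

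First I would write the Bochner--Kodaira identity for the holomorphic section $\overline u$ of the Hermitian holomorphic bundle $K_X\otimes\mathfrak{g}_P^{\mathbb{C}}$ over the compact Kähler manifold $(X,\omega)$. Since $\overline{\partial}_A\overline u=0$, the full covariant derivative reduces to its $(1,0)$-part and one obtains, pointwise,
\[
\tfrac12\,\Delta|u|^2 \;=\; |\nabla_A u|^2 \;-\; \big\langle\, i\Lambda\Theta(K_X\otimes\mathfrak{g}_P^{\mathbb{C}})\,\overline u,\,\overline u\,\big\rangle,
\]
the Laplacian being normalised so that subharmonic means $\Delta\ge 0$. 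The curvature of the tensor product splits as the $K_X$ contribution, whose trace $i\Lambda\Theta(K_X)$ equals $-\tfrac{s}{2}$ up to a fixed positive normalisation of the scalar curvature $s$, plus the $\mathfrak{g}_P^{\mathbb{C}}$ contribution, which acts by $\ad(i\Lambda F_A)$. Thus the $K_X$ term enters the right-hand side as $+\tfrac{s}{2}|u|^2\ge 0$, and it remains to control the bundle term.

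The decisive step is to substitute the DT-instanton equation \eqref{eq:New_Mon2}, $\Lambda F_A=\ast[u\wedge\overline u]$, into the bundle term and to determine its sign. Writing $u=\eta\otimes\xi$ locally, with $\eta$ a unit generator of $\Lambda^{0,3}$ and $\xi$ a section of $\mathfrak{g}_P^{\mathbb{C}}$, one has $\ast[u\wedge\overline u]=c\,[\xi,\overline\xi]$ for an explicit scalar $c$ arising from $\ast(\eta\wedge\overline\eta)$ and the $\mathbb{C}$-linear Hodge star; then, using $\ad$-invariance of the inner product on $\mathfrak{g}$ together with the identity $\overline{[\xi,\overline\xi]}=-[\xi,\overline\xi]$, the term $-\big\langle\ad(i\Lambda F_A)\overline u,\overline u\big\rangle$ should reduce to a nonnegative multiple of $\|[\xi,\overline\xi]\|^2=\|[u\wedge\overline u]\|^2$. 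I expect this sign computation---tracking the factors of $i$, the constant $c$ and the normalisation of $s$ so that both curvature contributions enter with the same nonnegative sign---to be the main obstacle; the rest of the argument is formal.

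Granting this, the Weitzenböck formula becomes $\tfrac12\Delta|u|^2=|\nabla_A u|^2+\tfrac{s}{2}|u|^2+c'\,\|[u\wedge\overline u]\|^2\ge 0$, so $|u|^2$ is subharmonic on the compact manifold $X$ and hence constant. Consequently each nonnegative summand vanishes identically: $\nabla_A u=0$, $s\,|u|^2=0$, and $[u\wedge\overline u]=0$. The last identity gives $\Lambda F_A=\ast[u\wedge\overline u]=0$, so that $A$ is HYM. Finally, since $s\ge 0$ and $\int_X s\,\omega^3$ is a positive multiple of $c_1(X)\cup[\omega]^2$, the metric has $s>0$ at some point; as $|u|$ is constant, the relation $s|u|^2=0$ then forces $|u|\equiv 0$, i.e.\ $u=0$, completing the proof.
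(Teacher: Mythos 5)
Your main argument is, modulo packaging, the same as the paper's. The paper applies the Weitzenb\"ock formula $2\overline{\partial}_A\overline{\partial}_A^* u = \nabla_A^*\nabla_A u + \tfrac{s}{4}u + [i\Lambda F_A,u]$ to $u$ itself (the left-hand side vanishing because $\overline{\partial}_A^*u=0$ by the preceding proposition, while $\overline{\partial}_A u=0$ for type reasons), substitutes $\Lambda F_A=\ast[u\wedge\overline u]$, and uses $\Ad$-invariance to obtain $\tfrac12\Delta|u|^2=-\tfrac{s}{4}|u|^2-|[u\wedge\overline u]|^2-|\nabla_A u|^2$. Your Bochner--Kodaira identity for the holomorphic section $\overline u$ of $K_X\otimes\mathfrak{g}_P^{\mathbb C}$ is this same identity in different clothing: the $K_X$ curvature produces the scalar-curvature term and $\ad(i\Lambda F_A)$ the commutator term. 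The sign computation you flag as the main obstacle does come out as you hope --- $-\Re\langle[i\Lambda F_A,u],\overline u\rangle=-|[u\wedge\overline u]|^2$ after substituting the second DT equation, exactly as in the paper --- so up to that point your proof is sound and yields $\nabla_A u=0$, $s|u|^2=0$, $[u\wedge\overline u]=0$, hence $\Lambda F_A=0$ and $A$ HYM.

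The genuine gap is your final step. The total scalar curvature is indeed a positive multiple of $c_1(X)\cup[\omega]^2$, but that number need not be positive: it vanishes whenever $c_1(X)=0$ (the flat torus $T^6$, or any Ricci-flat K\"ahler threefold), and then $s\ge 0$ forces $s\equiv 0$, so no point with $s>0$ exists and your argument only gives that $u$ is parallel with $[u\wedge\overline u]=0$, not $u=0$. Moreover this cannot be repaired within the stated hypotheses: on flat $T^6$ with the trivial product connection and $u$ a constant $(0,3)$-form valued in a real element $\xi\in\mathfrak{g}$ (so that $[u\wedge\overline u]\propto[\xi,\xi]=0$), the pair $(A,u)$ is a DT-instanton with $u\neq 0$. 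The honest conclusion of the Bochner identity is that $u$ vanishes wherever $s>0$, whence $u\equiv 0$ as soon as $s$ is positive at one point (since $|u|$ is constant); the scalar-flat case needs extra input, as in the paper's separate Calabi--Yau discussion where irreducibility of $A$ and semisimplicity of $G$ are invoked to kill the parallel Higgs field. To be fair, the paper's own proof commits the same gloss in jumping from $s|u|^2=0$ to $u=0$; but your attempted patch --- asserting positivity of $c_1(X)\cup[\omega]^2$ --- replaces a gloss with a claim that is simply false in general.
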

\begin{proof}
	Using the Weitzenb\"ock formula 
	$$2 \delbar_A \delbar_A^* u = \nabla_A^* \nabla_A u + \frac{s}{4} u +[i \Lambda F_A , u] $$
	and the last equation $\Lambda F_A =   \ast [ u \wedge \overline{ u} ]$ we compute
	\begin{eqnarray*}
		\frac{1}{2} \Delta |u|^2 & = & \Re (\langle \nabla_A^* \nabla_A u , \overline{ u} \rangle ) - |\nabla_A u|^2 \\
		& = & - \frac{s}{4} |u|^2 - \Re (\langle [ i\Lambda F_A , u] , \overline{ u} \rangle ) - |\nabla_A u|^2   \\
		& = & - \frac{s}{4} |u|^2 - |[u \wedge \overline{ u}]|^2 - |\nabla_A u|^2  .
	\end{eqnarray*}
	Thus, if the scalar curvature $s$ of the K\"ahler metric is nonnegative we must have $u=0$ and the equations above reduce to the HYM ones.
\end{proof}

\begin{remark}
Suppose $u \in L^4$, which is a natural assumption from the variational point of view and also follows from the assumption that $u \in L^2$ and Moser iteration as shown in \cite{Tanaka2013}. Then, integrating the computation above shows that $\nabla_A u \in L^2$. 
\end{remark}
%
%
%
%
%
%

\subsection{For compatible $\SU(3)$-structures}

One other interesting case when the equations simplify is when the almost Hermitian structure admits a compatible $\SU(3)$-structure. Of course, this is a very restrictive condition. Indeed, an almost Hermitian structure admits a compatible $\SU(3)$-structure if and only if the canonical bundle $K_X:=\Lambda^{3,0}_{\mathbb{C}}X$ is (topologically) trivial. When this is the case we shall say that a compatible $\SU(3)$-structure is \emph{pseudo-holomorphcially} trivial if there is a $\Omega$ so that $\overline{\partial} \Omega =0$. Under this further restriction we can rewrite the equations as follows.

\begin{proposition}\label{prop:New_Mon_Eq}
Suppose there is compatible $\SU(3)$-structure $(J,\omega, \Omega)$ satisfying $\overline{\partial} \Omega=0$. Let $\Phi_1, \Phi_2 \in \Omega^0(X, \mathfrak{g})$ and write $u= \frac{i}{4}(\Phi_1 + i \Phi_2) \overline{\Omega}$. Then, equations \ref{eq:New_Mon1}--\ref{eq:New_Mon2} may be written as
\begin{eqnarray}\label{eq:mon1}
\ast d_A \Phi_1 & = & F_A \wedge \Omega_1 - d_A \Phi_2 \wedge \frac{\omega^2}{2}, \\ \label{eq:mon2}
F_A \wedge \frac{\omega^2}{2} & = & [\Phi_1 , \Phi_2] \frac{\omega^3}{3!}.
\end{eqnarray}
Equivalently, the first equation \ref{eq:mon1} may be written as
\begin{equation}\label{eq:mon3}
\ast d_A \Phi_2 = F_A \wedge \Omega_2 + d_A \Phi_1 \wedge \frac{\omega^2}{2}.
\end{equation}
\end{proposition}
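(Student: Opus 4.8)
The plan is to substitute the ansatz $u=\tfrac{i}{4}(\Phi_1+i\Phi_2)\overline\Omega$ into \eqref{eq:New_Mon1}--\eqref{eq:New_Mon2} and reduce each equation through the pointwise linear algebra of the $\SU(3)$-structure. Write $\phi=\Phi_1+i\Phi_2\in\Omega^0(X,\mathfrak{g}_P^{\mathbb{C}})$. I would first record the identities that drive everything. Since $\Omega_2=\ast\Omega_1$ and $\ast\ast=-1$ on $3$-forms, one has $\ast\Omega=-i\Omega$, hence $\ast\overline\Omega=i\overline\Omega$, so $\ast u=iu$ and $\overline{\partial}_A^* u=-\ast\partial_A\ast u=-i\ast\partial_A u$. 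Conjugating the hypothesis $\overline\partial\Omega=0$ gives $\partial\overline\Omega=0$, so the Leibniz rule yields $\partial_A u=\tfrac{i}{4}\,\partial_A\phi\wedge\overline\Omega$ — the potential Nijenhuis contribution $\phi\,\mu\overline\Omega$ has type $(2,2)$ and does not enter the $(1,3)$-part. Consequently \eqref{eq:New_Mon1} becomes the decodable equation $F_A^{0,2}=\tfrac14\ast(\partial_A\phi\wedge\overline\Omega)$. I would also record the volume normalizations $\Omega_1\wedge\Omega_2=-\tfrac23\omega^3$ and $\omega^3=6\,\dvol$, together with the Hodge identity $\ast\beta=i\,\beta\wedge\tfrac{\omega^2}{2}$ for $(0,1)$-forms $\beta$, each checked in a local unitary coframe.

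For the second equation, substituting $u$ and $\overline u=-\tfrac{i}{4}\overline\phi\,\Omega$ gives $[u\wedge\overline u]=\tfrac1{16}[\phi,\overline\phi]\,\overline\Omega\wedge\Omega$; using $[\phi,\overline\phi]=-2i[\Phi_1,\Phi_2]$ and $\overline\Omega\wedge\Omega=2i\,\Omega_1\wedge\Omega_2$ this collapses to a multiple of $[\Phi_1,\Phi_2]\,\omega^3$. Applying $\ast$ to \eqref{eq:New_Mon2}, which reads $\ast(F_A\wedge\omega^2)=\ast[u\wedge\overline u]$, then turns it directly into \eqref{eq:mon2}, the only remaining work being to match the scalar constant against the normalization $\omega^3=-\tfrac32\Omega_1\wedge\Omega_2$.

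For the first equation I would package \eqref{eq:mon1} and \eqref{eq:mon3} as the real and imaginary parts, with respect to the single conjugation $\sigma$ that conjugates both forms and $\mathfrak{g}_P$-coefficients, of the $\mathfrak{g}_P^{\mathbb{C}}$-valued $5$-form equation
\[
E:=\ast d_A\phi-F_A\wedge\Omega-i\,d_A\phi\wedge\tfrac{\omega^2}{2}=0 ,
\]
and decompose $E$ into types. Since $F_A\wedge\Omega=F_A^{0,2}\wedge\Omega$ is of pure type $(3,2)$, the $(2,3)$-component of $E$ is $\ast\overline\partial_A\phi-i\,\overline\partial_A\phi\wedge\tfrac{\omega^2}{2}$, which vanishes identically by the $(0,1)$-Hodge identity above; thus $E=E^{3,2}$. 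Because $\wedge\,\Omega\colon\Omega^{0,2}\to\Omega^{3,2}$ is an isomorphism, \eqref{eq:New_Mon1} is equivalent to $F_A^{0,2}\wedge\Omega=\overline{\partial}_A^* u\wedge\Omega$, and the pointwise identity
\[
\tfrac14\ast(\alpha\wedge\overline\Omega)\wedge\Omega=\ast\alpha-i\,\alpha\wedge\tfrac{\omega^2}{2}\qquad\text{for }(1,0)\text{-forms }\alpha,
\]
verified in a unitary coframe and applied to $\alpha=\partial_A\phi$, turns this into $E^{3,2}=0$. Finally, writing $\Re E=\tfrac12(E^{3,2}+\sigma E)$ and $\Im E=\tfrac1{2i}(E^{3,2}-\sigma E)$, the two summands have the disjoint types $(3,2)$ and $(2,3)$, so the vanishing of $\Re E$ alone, of $\Im E$ alone, and of $E^{3,2}$ are all equivalent. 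This simultaneously gives \eqref{eq:mon1}$\Leftrightarrow$\eqref{eq:New_Mon1} and the asserted equivalence \eqref{eq:mon1}$\Leftrightarrow$\eqref{eq:mon3}.

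The main obstacle is the bookkeeping of the pointwise Hodge-star identities on the $\SU(3)$-forms — principally $\tfrac14\ast(\alpha\wedge\overline\Omega)\wedge\Omega=\ast\alpha-i\,\alpha\wedge\tfrac{\omega^2}{2}$ and $\ast\beta=i\,\beta\wedge\tfrac{\omega^2}{2}$ — and keeping every constant and sign consistent with the normalization $\omega^3=-\tfrac32\Omega_1\wedge\Omega_2$ and the convention $\ast\Omega=-i\Omega$. These are routine unitary-coframe computations, but they are where essentially all of the sign and factor errors would hide; the one genuinely structural point, namely that the non-integrability term drops out, is guaranteed cleanly by the hypothesis $\overline\partial\Omega=0$ (equivalently $\partial\overline\Omega=0$).
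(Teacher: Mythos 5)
Your proposal is correct and takes essentially the same route as the paper's proof: substitute the ansatz, use $\ast\overline{\Omega}=i\overline{\Omega}$ together with $\overline{\partial}\Omega=0$ (so the Nijenhuis term drops out) to reduce \eqref{eq:New_Mon1} to $F_A^{0,2}=\tfrac{1}{4}\ast\bigl(\partial_A\phi\wedge\overline{\Omega}\bigr)$, wedge with $\Omega$, and separate into types --- your pointwise identity $\tfrac{1}{4}\ast(\alpha\wedge\overline{\Omega})\wedge\Omega=\ast\alpha-i\,\alpha\wedge\tfrac{\omega^2}{2}$ is the paper's projection identity $8a^{1,0}=-\ast(\ast(a\wedge\overline{\Omega})\wedge\Omega)$ in a different guise, and your packaging of \eqref{eq:mon1} and \eqref{eq:mon3} as real and imaginary parts of a single pure-type $(3,2)$ equation just makes the paper's ``separate into types'' step and the equivalence \eqref{eq:mon1}$\Leftrightarrow$\eqref{eq:mon3} explicit in one stroke. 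Your deferral of the scalar constants to the normalization is also reasonable, since that is exactly where the paper itself is loosest (its stated $\omega^3=-\tfrac{3}{2}\Omega_1\wedge\Omega_2$ is inconsistent in sign with the $\Omega\wedge\overline{\Omega}=-8i\dvol$ and $\ast\overline{\Omega}=i\overline{\Omega}$ it actually uses, and $\Lambda F_A$ appears both as $\ast(F_A\wedge\omega^2)$ and $\ast(F_A\wedge\omega^2/2)$).
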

\begin{proof}
We shall use the notation $\Phi=\Phi_1+i \Phi_2$. We start by inserting $u$ as in the statement into the first equation \ref{eq:New_Mon1}
\begin{eqnarray}\nonumber
F^{0,2} & = &   \overline{\partial}_A^* \left( \frac{i}{4} \Phi \overline{\Omega} \right) =-\frac{i}{4}\ast  \partial_A \left( \Phi \ast \overline{\Omega} \right) = \frac{1}{4}\ast   \left( \partial_A \Phi \wedge \overline{\Omega} \right),
\end{eqnarray}
where we used $\ast \overline{\Omega}=  i \overline{\Omega}$ and the hypothesis that $\overline{\partial} \Omega =0$. Next, wedge this equation with $\Omega$ 
$$F \wedge \Omega = \frac{1}{4}\ast   \left( \partial_A \Phi \wedge \overline{\Omega} \right) \wedge \Omega = 2 \ast \partial_A \Phi,$$ 
where we used the fact that the projection $\Omega^1 \rightarrow \Omega^{1,0}$ can be written as $8a^{1,0}= - \ast ( \ast (a \wedge \overline{\Omega}) \wedge \Omega)$, for $a \in \Omega^1$. Then, separate this equation into types and use the fact that 
$$\ast ( d_A \Phi_i \wedge \omega^2 /2 )= - J d_A \Phi_i$$ 
to obtain equations \ref{eq:mon1} and \ref{eq:mon3}. Finally, equation \ref{eq:mon2}, which follows from inserting $u= \frac{i}{4} \Phi \overline{\Omega}$, using $\Omega \wedge \overline{\Omega}=-8i \dvol$ and $\Lambda F_A = \ast ( F_A \wedge \omega^2/2 )$.
\end{proof}

In several cases the DT-instanton equations \ref{eq:New_Mon1}--\ref{eq:New_Mon2} reduce to the pHYM equations, which further motivates studying the DT-instantons as an extension of the pHYM connections for the general almost Hermitian structure. In this direction, we start by proving that if there is a compatible half-flat $\SU(3)$-structure, then the DT instanton equations \ref{eq:mon1}--\ref{eq:mon2} reduce to a simpler equation with only one Higgs field.

\begin{proposition}\label{prop:Half_Flat_Vanishing}
Let $X$ be compact and $(J,\omega, \Omega)$ be a half-flat $\SU(3)$-structure. Then, any irreducible DT-instanton $(A,u)$ for a simple Lie group $G$, satisfies $\Phi_1=0$ and
\begin{eqnarray}
I d_A \Phi_2 & = & - \ast (F_A \wedge \Omega_1 ) \\ \label{eq:mon1_Half_Flat}
F_A \wedge \frac{\omega^2}{2} & = & 0, \label{eq:mon2_Half_Flat}
\end{eqnarray}
or, rewriting the first equation, $d_A \Phi_2 = - \ast( F_A \wedge \Omega_2 ) $.
\end{proposition}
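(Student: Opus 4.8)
The plan is to show that the half-flat conditions force $\Phi_1$ to be covariantly constant, and then to kill it using irreducibility. I would work throughout with the rewritten system \ref{eq:mon1}--\ref{eq:mon2} of Proposition \ref{prop:New_Mon_Eq}. This is legitimate because a half-flat structure satisfies $d\Omega_1 = 0$ and $d\omega^2 = 0$; a quick type decomposition of $d\Omega_1$ (the $(3,1)$-part of $d\overline{\Omega}$ vanishes for degree reasons) shows $\overline{\partial}\Omega = 0$, so the hypotheses of that proposition hold, and we may write $u = \tfrac{i}{4}(\Phi_1 + i\Phi_2)\overline{\Omega}$.

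The first main step is to differentiate the equation \ref{eq:mon1}. Applying $d_A$ and using the Bianchi identity $d_A F_A = 0$ together with $d\Omega_1 = 0$, the term $d_A(F_A \wedge \Omega_1)$ drops out; using $d\omega^2 = 0$ and $d_A^2 \Phi_2 = [F_A,\Phi_2]$, the other term reduces to $-[F_A,\Phi_2]\wedge \tfrac{\omega^2}{2} = -[\,F_A \wedge \tfrac{\omega^2}{2},\,\Phi_2\,]$. Substituting the second equation \ref{eq:mon2} for $F_A \wedge \tfrac{\omega^2}{2}$ yields the second-order identity
$$ d_A \ast d_A \Phi_1 = -\,[\,[\Phi_1,\Phi_2],\Phi_2\,]\,\frac{\omega^3}{3!}. $$

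Next I would pair this with $\Phi_1$ and integrate over the closed manifold $X$. Integration by parts (Stokes applied to $d\langle \Phi_1 \wedge \ast d_A \Phi_1\rangle$) rewrites $\int_X \langle \Phi_1, d_A \ast d_A \Phi_1\rangle$ as $-\Vert d_A \Phi_1\Vert_{L^2}^2$, while ad-invariance of the inner product on $\mathfrak{g}$ gives the pointwise identity $\langle \Phi_1, [[\Phi_1,\Phi_2],\Phi_2]\rangle = -\vert [\Phi_1,\Phi_2]\vert^2$. Combining these produces
$$ \Vert d_A \Phi_1\Vert_{L^2}^2 + \Vert [\Phi_1,\Phi_2]\Vert_{L^2}^2 = 0, $$
so both $d_A \Phi_1 = 0$ and $[\Phi_1,\Phi_2] = 0$. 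Since $A$ is irreducible and $G$ is simple, the only covariantly constant section of $\mathfrak{g}_P$ is zero, whence $\Phi_1 = 0$. Setting $\Phi_1 = 0$ in \ref{eq:mon2} gives $F_A \wedge \tfrac{\omega^2}{2} = 0$, and in \ref{eq:mon1} gives $F_A \wedge \Omega_1 = d_A \Phi_2 \wedge \tfrac{\omega^2}{2}$; applying $\ast$ and the relation $\ast(d_A \Phi_2 \wedge \tfrac{\omega^2}{2}) = -J d_A \Phi_2$ recovers the stated first equation, while the same substitution in \ref{eq:mon3} together with $\ast\ast = -1$ on $1$-forms gives the equivalent form $d_A \Phi_2 = -\ast(F_A \wedge \Omega_2)$.

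I expect the main obstacle to be bookkeeping rather than conceptual: one must track the signs in the integration by parts and in the ad-invariance manipulation carefully, so that the two nonnegative terms genuinely \emph{add} to zero rather than cancel, and one must justify cleanly the passage from ``$\Phi_1$ covariantly constant'' to ``$\Phi_1 = 0$'' via irreducibility and the triviality of the centre of a simple $\mathfrak{g}$. It is also worth verifying that the argument uses the half-flat hypotheses only through $d\Omega_1 = 0$ and $d\omega^2 = 0$, so that no hidden closedness assumption creeps in.
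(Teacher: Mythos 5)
Your proposal is correct and takes essentially the same route as the paper's proof: both establish $\overline{\partial}\Omega=0$ from half-flatness by type decomposition, differentiate equation \ref{eq:mon1} using the Bianchi identity together with $d\Omega_1=0=d\omega^2$, substitute equation \ref{eq:mon2} to reach the key identity $d_A \ast d_A \Phi_1 = -[[\Phi_1,\Phi_2],\Phi_2]\,\tfrac{\omega^3}{3!}$, and conclude $\Phi_1=0$ from irreducibility of $A$ and triviality of the centre of $\mathfrak{g}$. The only (interchangeable) difference is that you extract $d_A\Phi_1=0$ and $[\Phi_1,\Phi_2]=0$ by a single global integration by parts, whereas the paper derives the pointwise formula $\Delta \tfrac{\vert\Phi_1\vert^2}{2} = -\vert[\Phi_1,\Phi_2]\vert^2 - \vert\nabla_A\Phi_1\vert^2$ and invokes subharmonicity of $\vert\Phi_1\vert^2$ on the compact $X$.
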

\begin{proof}
First, notice that if the $\SU(3)$ structure is half flat then $d\Omega_1=0$ and so $d\Omega=id\Omega_2$ is real. However, by type decomposition 
$d \Omega = \overline{\partial} \Omega + N(\Omega) $
is of type $(3,1)+(2,2)$, where $N$ is the Nijenhuis tensor. Hence, $\overline{\partial} \Omega =0$ and we can write the DT instanton equations as in \ref{eq:mon1}--\ref{eq:mon2}. Now, equip the bundle $\mathfrak{g}_{P}$ with an $\Ad$-invariant metric compatible with $A$, and compute
$$\Delta \frac{\vert \Phi_1 \vert^2}{2} = \langle \Phi_1 , \Delta_A \Phi_1 \rangle - \vert \nabla_A \Phi_1 \vert^2.$$
Using equation \ref{eq:mon1} together with the Bianchi identity and $d \Omega_1=0=d \omega^2$, we have
\begin{eqnarray}\nonumber
\Delta_A \Phi_1 & = & - \ast d_A \left( F_A \wedge \Omega_1 - d_A \Phi_2 \wedge \frac{\omega^2}{2} \right)\\ \nonumber
 & = & \ast [F_A , \Phi_2] \wedge \frac{\omega^2}{2} \\ \nonumber
 & = & [[\Phi_1, \Phi_2], \Phi_2],
\end{eqnarray} 
where in the last equality we used equation \ref{eq:mon2}. Hence $\langle \Delta_A \Phi_1 , \Phi_1 \rangle = - \vert [\Phi_1 , \Phi_2] \vert^2$ and so 
$$\Delta \frac{\vert \Phi_1 \vert^2}{2} = - \vert [\Phi_1, \Phi_2] \vert^2 - \vert \nabla_A \Phi_1 \vert^2,$$
is subharmonic. As $X$ is compact, $\vert \Phi_1 \vert$ is constant and so the previous equation yields $[\Phi_1, \Phi_2]=0$ and $\nabla_A \Phi_1 =0$. Hence, given that the connection is irreducible, fixing a trivialization of $E$ at a point $p \in X$, $\Phi_1(p)$ is a central element, However, as $G$ is semisimple, we must have $\Phi_1=0$. The remaining equations follows simply from inserting $\Phi_1=0$ into equations \ref{eq:mon1}--\ref{eq:mon2}.
\end{proof}

For a compact Calabi-Yau manifold the DT-instanton equation further reduces to the HYM equations. Indeed, if the $\SU(3)$-structure is Calabi-Yau it also is half-flat and the DT instanton equations can be written as \ref{eq:mon1_Half_Flat}--\ref{eq:mon2_Half_Flat}, with $\Phi_1=0$. Moreover, from the Bianchi identity and $d\Omega_2=0$ it follows that
$$\Delta \Phi_2 = \ast d_A \ast d_A \Phi_2 = \ast d_A (F_A \wedge \Omega_2) =0.$$
Thus $\Delta \vert \Phi_2 \vert^2 = -2 \vert \nabla_A \Phi_2 \vert^2 \leq 0$, and again, if $X$ is compact, $G$ semisimple and $A$ is irreducible must have $\Phi_2 =0$. Hence, equations yields \ref{eq:mon1_Half_Flat}--\ref{eq:mon2_Half_Flat}, so that $A$ is actually HYM. In the case of noncompact Calabi-Yau manifolds, irreducible DT-instantons with $\overline{\partial}^* u \neq 0$ for semisimple Lie groups do exist, and are expected to be related to special Lagrangian submanifolds, see \cite{Oliveira2014} for more on this. On nearly K\"ahler manifolds, a similar vanishing result holds true, as we now state.

\begin{proposition}\label{prop:NKMon}
Let $X$ be compact\footnote{Any nearly K\"ahler manifold is Einstein with positive scalar curvature. Hence, if it is complete, must actually be compact.} and $(J,g)$ be compatible with a nearly K\"ahler structure. Then, any DT-instanton $(A,u)$ satisfies
\begin{eqnarray}\label{eq:HYM1}
 F_A^{0,2}  & = & 0 \\ \label{eq:HYM2}
F_A \wedge \omega^2 & = & 0,
\end{eqnarray}
and $\nabla_A \Phi_1 = \nabla_A \Phi_2 = [\Phi_1, \Phi_2] =0$, with $\Phi_1, \Phi_2$ as in proposition \ref{prop:New_Mon_Eq}. In particular, $A$ is a pHYM connection.
\end{proposition}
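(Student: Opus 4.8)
The plan is to exploit that a nearly K\"ahler structure is in particular half-flat, so that the analysis of Proposition \ref{prop:Half_Flat_Vanishing} handles $\Phi_1$, and then to use the extra structure equation $d\Omega_2 = -2\omega^2$ (which a generic half-flat structure does not satisfy) to handle $\Phi_2$. First I would record that for a nearly K\"ahler structure one has $d\Omega_1 = \tfrac13 d(d\omega) = 0$ and $d\omega^2 = 2\,\omega\wedge d\omega = 6\,\omega\wedge\Omega_1 = 0$, so the structure is half-flat; exactly as in the proof of Proposition \ref{prop:Half_Flat_Vanishing} the type decomposition of $d\Omega$ then forces $\overline{\partial}\Omega = 0$, and hence Proposition \ref{prop:New_Mon_Eq} applies. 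Writing $u = \tfrac{i}{4}(\Phi_1 + i\Phi_2)\overline{\Omega}$ and $\Phi = \Phi_1 + i\Phi_2$, the DT-instanton equations become \eqref{eq:mon1}, \eqref{eq:mon2} and \eqref{eq:mon3}.

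Next I would run the $\Phi_1$ computation verbatim from the proof of Proposition \ref{prop:Half_Flat_Vanishing}, observing that up to the conclusion $\nabla_A\Phi_1 = 0$ and $[\Phi_1,\Phi_2] = 0$ that argument uses neither irreducibility nor simplicity of $G$ (these enter only in the subsequent step $\Phi_1 = 0$, which I do not need here). Concretely, differentiating \eqref{eq:mon1} and using the Bianchi identity together with $d\Omega_1 = 0 = d\omega^2$ and \eqref{eq:mon2} gives $\Delta_A\Phi_1 = [[\Phi_1,\Phi_2],\Phi_2]$, so that with an $\Ad$-invariant metric on $\mathfrak{g}_P$
\[
\tfrac12\Delta|\Phi_1|^2 = -|[\Phi_1,\Phi_2]|^2 - |\nabla_A\Phi_1|^2 \le 0 .
\]
Since $X$ is compact, integration forces both terms to vanish, i.e. $[\Phi_1,\Phi_2] = 0$ and $\nabla_A\Phi_1 = 0$. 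Feeding $[\Phi_1,\Phi_2] = 0$ back into \eqref{eq:mon2} yields $F_A\wedge\omega^2 = 0$, which is the second of the two asserted equations.

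The new input is the treatment of $\Phi_2$, and this is where the nearly K\"ahler identity $d\Omega_2 = -2\omega^2$ is essential. Because $\nabla_A\Phi_1 = 0$, the term $d_A\Phi_1\wedge\tfrac{\omega^2}{2}$ in \eqref{eq:mon3} drops out, leaving $\ast d_A\Phi_2 = F_A\wedge\Omega_2$. Applying $d_A$, using the Bianchi identity and $d\Omega_2 = -2\omega^2$, and then invoking $F_A\wedge\omega^2 = 0$ from the previous step, I would obtain $d_A\ast d_A\Phi_2 = F_A\wedge d\Omega_2 = -2\,F_A\wedge\omega^2 = 0$, hence $\Delta_A\Phi_2 = 0$. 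Compactness of $X$ then gives $\tfrac12\Delta|\Phi_2|^2 = -|\nabla_A\Phi_2|^2 \le 0$, so that $\nabla_A\Phi_2 = 0$ as well.

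Finally I would deduce $F_A^{0,2} = 0$. Since both $\Phi_1$ and $\Phi_2$ are now parallel, $d_A\Phi = 0$, and in particular its $(1,0)$-part $\partial_A\Phi$ vanishes; substituting into the identity $F_A^{0,2} = \tfrac14\ast(\partial_A\Phi\wedge\overline{\Omega})$ established inside the proof of Proposition \ref{prop:New_Mon_Eq} gives $F_A^{0,2} = 0$. Together with $\Lambda F_A = 0$ this exhibits $A$ as a pHYM connection, completing the proof. The one place that needs care --- and essentially the only spot where the argument could go wrong --- is the bookkeeping of signs in the Weitzenb\"ock identities and in the repeated applications of $d_A$ (in particular the sign of $d\Omega_2$, and the verification that nearly K\"ahler is genuinely half-flat so that $\overline{\partial}\Omega = 0$); everything else follows directly from the already-established Propositions \ref{prop:New_Mon_Eq} and \ref{prop:Half_Flat_Vanishing}.
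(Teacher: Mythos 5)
Your proof is correct, and it rests on the same underlying strategy as the paper --- rewrite the equations via Proposition \ref{prop:New_Mon_Eq}, compute Laplacians of the Higgs fields using the Bianchi identity and the nearly K\"ahler structure equations, and conclude by subharmonicity on a compact manifold --- but you organize it sequentially where the paper runs the two fields in parallel. The paper's Step 1 computes $\Delta_A\Phi_2 = 4[\Phi_1,\Phi_2] - [[\Phi_1,\Phi_2],\Phi_1]$ before anything is known about $\Phi_1$, and its Step 2 then needs the $\Ad$-invariance cancellation $\langle\Phi_2,[\Phi_1,\Phi_2]\rangle = 0$ to see that $\vert\Phi_2\vert^2$ is subharmonic. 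You instead settle $\Phi_1$ first, export $[\Phi_1,\Phi_2]=0$ and $\nabla_A\Phi_1=0$, hence $F_A\wedge\omega^2=0$ from \eqref{eq:mon2}, after which \eqref{eq:mon3} collapses to $\ast d_A\Phi_2 = F_A\wedge\Omega_2$ and the identity $d\Omega_2=-2\omega^2$ gives $\Delta_A\Phi_2=0$ outright --- mirroring the paper's Calabi-Yau computation rather than its nearly K\"ahler one, and avoiding the $4[\Phi_1,\Phi_2]$ term and the cancellation altogether. A second genuine merit is that you are careful about a point the paper's parenthetical aside (``also proposition \ref{prop:Half_Flat_Vanishing} applies'') glosses over: that proposition as stated assumes $A$ irreducible and $G$ simple, hypotheses absent from Proposition \ref{prop:NKMon}, and you correctly isolate the portion of its proof (everything up to $\nabla_A\Phi_1=0$ and $[\Phi_1,\Phi_2]=0$) that needs neither. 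You also make explicit the final step $F_A^{0,2} = \tfrac14\ast\bigl(\partial_A\Phi\wedge\overline{\Omega}\bigr) = 0$, which the paper leaves implicit in ``the DT-instanton equations reduce to the pHYM ones.'' What the paper's parallel computation buys in exchange is the full Laplacian formula for $\Phi_2$, valid before one knows $[\Phi_1,\Phi_2]=0$; your route is leaner but yields only the degenerate case needed here.
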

\begin{proof}
As any nearly K\"ahler structure is half-flat, the same proof as before yields that proposition \ref{prop:New_Mon_Eq} applies and $(A,u)$ can be written as in \ref{eq:mon1}--\ref{eq:mon2} (In fact, also proposition \ref{prop:Half_Flat_Vanishing} applies and we could start applying the result therein). Here, we proceed in three steps from equations \ref{eq:mon1}--\ref{eq:mon3}.\\

\noindent \textbf{Step 1:} We prove that
$$\Delta_A \Phi_1 = [[\Phi_1 , \Phi_2] , \Phi_2 ] \ , \ \Delta_A \Phi_2 =  4 [\Phi_1 , \Phi_2]  - [ [\Phi_1 , \Phi_2] , \Phi_1 ].$$
This follows from equation \ref{eq:mon3}, the Bianchi identity and the equations for a nearly K\"ahler $\SU(3)$-structure. Indeed,
\begin{eqnarray}\nonumber
\Delta_A \Phi_2 & = & - \ast d_A \ast d_A \Phi_2 = - \ast d_A \left(  F_A \wedge \Omega_2 + d_A \Phi_1 \wedge \frac{\omega^2}{2} \right) \\ \nonumber
& = & - \ast \left( -2 F_A \wedge \omega^2 + [ F_A , \Phi_1 ] \wedge \frac{\omega^2}{2} \right),
\end{eqnarray}
and inserting here equation \ref{eq:mon2} gives
\begin{eqnarray}\label{eq:IntDelta}
\Delta_A \Phi_2 & = &   4 [\Phi_1 , \Phi_2]  - [ [\Phi_1 , \Phi_2] , \Phi_1 ] .
\end{eqnarray}
The case of $\Phi_1$ follows from a similar, but easier, computation, close to the Calabi-Yau case in lemma $3.1.14$ of \cite{Oliveira2014}.\\

\noindent \textbf{Step $2$}: We prove that
\begin{eqnarray}\nonumber
\Delta \frac{\vert \Phi_1 \vert^2}{2} = - \vert [\Phi_1 , \Phi_2] \vert^2 - \vert \nabla_A \Phi_1 \vert^2 \ , \ \Delta \frac{\vert \Phi_2 \vert^2}{2} = - \vert [\Phi_1 , \Phi_2] \vert^2 - \vert \nabla_A \Phi_2 \vert^2 
\end{eqnarray}
As before, we shall only prove the case of $\Phi_2$. This follows from inserting \ref{eq:IntDelta} into the following computation 
\begin{eqnarray}\nonumber
\Delta \frac{\vert \Phi_2 \vert^2}{2} & = & \langle \Phi_2 , \Delta_A \Phi_2 \rangle  - \vert \nabla_A \Phi_2 \vert^2 \\  \nonumber
& = &  4 \langle \Phi_2 , [\Phi_1 , \Phi_2] \rangle - \langle [ [\Phi_1 , \Phi_2] , \Phi_1 ] , \Phi_2 \rangle  - \vert \nabla_A \Phi_2 \vert^2 \\ \nonumber
& = &  - \vert  [\Phi_1 , \Phi_2] \vert^2  - \vert \nabla_A \Phi_2 \vert^2,
\end{eqnarray}
where we used the $\Ad$-invariance of the inner product on $\mathfrak{g}_P$.\\

\noindent \textbf{Step 3}: We finish the proof by noticing that, from step $2$, both $\vert \Phi_1 \vert^2$, $\vert \Phi_2 \vert^2$ are subharmonic and since $X$ is compact, they must actually be constant. Hence, their Laplacians vanish and once again step $2$ gives that $\nabla_A \Phi_1=\nabla_A \Phi_2 = [\Phi_1, \Phi_2]=0$ and the DT-instanton equations reduce to the pHYM ones.
\end{proof}

\begin{remark}
In the K\"ahler case the Dolbeaut splitting passes to cohomology and Hodge theory proves that there is a unique (up to gauge) HYM connection on any complex line bundle $L$ whose first Chern class $c_1(L)$ if of type $(1,1)$. This has degree $0$ in the case when $c_1(L)$ is primitive. In fact, the curvature of this HYM connection is the harmonic representative of $c_1(L)$. Similarly, it follows from a result of Lorenzo Foscolo \cite{Foscolo2016} that on a nearly K\"ahler manifold the harmonic representative of any degree-$2$ cohomology class is primitive of type $(1,1)$. Hence, by a repeated use of the Poincar\'e lemma, one proves that for any $\alpha \in H^2(X, \mathbb{Z})$ on a nearly K\"ahler manifold there is a complex line bundle $L$ with $c_1(L)=\alpha$ and which can be equipped with a pHYM connection unique up to gauge.
\end{remark}


\section{Invariant almost-Hermitian structures on $\mathbb{F}_{2}$}

The manifold of full flags in $\mathbb{C}^3$ is the homogeneous manifold $\SU(3)/ T^2$, where $T^2$ is the subgroup of diagonal matrices, a maximal torus in $\SU(3).$ As in \cite{Bryant2005}, we use the Maurer-Cartan form on $\SU(3)$ given by 
$$g^{-1} dg =  \left[ \begin {array}{ccc} i\beta_{{1}}&\theta_{{3}}+i\eta_{{3}}&-
\theta_{{2}}+i\eta_{{2}}\\ \noalign{\medskip}-\theta_{{3}}+i\eta_{{3}}
&i\beta_{{2}}&\theta_{{1}}+i\eta_{{1}}\\ \noalign{\medskip}\theta_{{2}
}+i\eta_{{2}}&-\theta_{{1}}+i\eta_{{1}}&i\beta_{{3}}\end {array}
\right] .
$$
Consider the set of simple roots $S=\lbrace{ r_i \rbrace}_{i=1}^3$ such that the $r_i \in S \subset (\mathfrak{t}^2)^*$ are given by
$$r_1=i\beta_1 +2i\beta_2, \ \ r_2=-2i \beta_1-i\beta_2, \ \ r_3=i\beta_1 -i\beta_2,$$
and set $\mathfrak{m}_i= (\mathfrak{sl}_{r_i}(3, \mathbb{C}) \oplus \mathfrak{sl}_{-r_i}(3, \mathbb{C}) ) \cap \mathfrak{su}(3) $, to be the real component of the root spaces. These are respectively
$$\mathfrak{m}_1^* = \langle \eta_1 , \theta_1 \rangle , \ \mathfrak{m}_2^* = \langle \eta_2 , \theta_2 \rangle , \ \mathfrak{m}_3^* = \langle \eta_3 , \theta_3 \rangle . $$
Then, we fix the complement $\mathfrak{m}$ to the isotropy $\mathfrak{t}^2 \subset \mathfrak{su}(3)$ so that $\mathfrak{m}^* =\mathfrak{m}^*_1 \oplus \mathfrak{m}^*_2 \oplus \mathfrak{m}^*_3$.

We would like to consider $\SU(3)$-invariant almost complex structures $J$. Evaluating any such $J$ at the identity coset and extending it by left invariance one obtains an $(\Ad,T^2)$-invariant map $J: \mathfrak{m} \to \mathfrak{m}$ with $J^2=-\id_{\mathfrak{m}}$. From Schur's lemma any such map must preserve the root spaces $\mathfrak{m}_i$ and we shall now describe them by fixing a trivialization for the pullback of $\Lambda^{1,0}_{\mathbb{C}}$ to $\SU(3)$.

For each $A_1,A_2,A_3 \in \mathbb{R}^+$ and $\epsilon_1, \epsilon_2, \epsilon_3 \in \lbrace \pm 1 \rbrace$, we define the complex valued $1$-forms
\begin{equation}\label{eq:Complex_Structures_Flag}
\begin{aligned}\nonumber
\alpha_1 & =  A_1(\eta_1 + i \epsilon_1 \theta_1) \\  
\alpha_2 & =  A_2(\eta_2 + i \epsilon_2 \theta_2) \\  
\alpha_3 & =  A_3(\eta_3 + i \epsilon_3 \theta_3) .
\end{aligned}
\end{equation}
We then consider the $\SU(3)$-invariant almost complex structures $J$ so that the $\alpha_i$ span the pullback of $\Lambda^{1,0}_{\mathbb{C}}$ to $\SU(3)$. Using the Maurer-Cartan equations we compute the Nijenhuis tensor of $J$. This is diagonal in the basis $\lbrace \alpha_i \rbrace_{i=1}^3$, $\lbrace \overline{\alpha}_j \wedge \overline{\alpha}_k \rbrace_{j<k}$ of $\Lambda^{1,0}_{\mathbb{C}}$ and $\Lambda^{0,2}_{\mathbb{C}}$ respectively, being given by $N=\diag (n_{11},n_{22},n_{33})$ with
\begin{eqnarray}\label{eq:Nijenhuis_Flag}
\begin{aligned}
n_{11} & =  \frac{1}{4} \,{\frac {A_{{1}} \left( \epsilon_1 \epsilon_2 +\epsilon_1 \epsilon_3 + \epsilon_2 \epsilon_3 +1 \right) }{A_{{2}}\epsilon_{{2}}A_{{3}}\epsilon_{{3}}}} \\ 
n_{22} & =  \frac{1}{4} \,{\frac {A_{{2}} \left(   \epsilon_1 \epsilon_2 +\epsilon_1 \epsilon_3 + \epsilon_2 \epsilon_3 +1 \right) }{A_{{3}}\epsilon_{{3}}A_{{1}}\epsilon_{{1}}}} \\ 
n_{33} & =  \frac{1}{4} \,{\frac {A_{{3}} \left(  \epsilon_1 \epsilon_2 +\epsilon_1 \epsilon_3 + \epsilon_2 \epsilon_3 +1 \right) }{A_{{1}}\epsilon_{{1}}A_{{2}}\epsilon_{{2}}}}.
\end{aligned}
\end{eqnarray}

\begin{remark}[Weyl group and symmetries]
	The Weyl group of $\SU(3)$ acts on $\mathbb{F}$ and so on the almost complex structures determined by $(\epsilon_1,\epsilon_2,\epsilon_3)$. Having in mind that $r_3=-(r_1+r_2)$ we easily find that the Weyl group is generated by the reflections $p_2,p_2,p_3$ such that $p_1(r_1,r_2,r_3)=(-r_1,-r_3,-r_2)$ and similarly for $p_2$ and $p_3$. In particular, $\sigma = p_2 \circ p_1$ is an element of order $3$ which cyclically permutes the roots $r_1,r_2,r_3$.
\end{remark}

In particular, from equations \ref{eq:Nijenhuis_Flag}, the almost complex structure $J$ is integrable if and only if
\begin{equation}\label{eq:Integrable_Flag}
\epsilon_1 \epsilon_2 +\epsilon_1 \epsilon_3 + \epsilon_2 \epsilon_3 +1 =0.
\end{equation}
In particular, up to action of the Weyl group there are only two invariant almost complex structures $J^i$ and $J^{nk}$ with $(\epsilon_1,\epsilon_2,\epsilon_3)$ respectively determined by $(1,1,-1)$ and $(1,1,1)$. In particular, inserting these into equation \ref{eq:Integrable_Flag} we find that while $J^i$ is integrable $J^{ni}$ is not. Indeed, we shall see shortly that $J^{ni}$ is compatible with a nearly K\"ahler structure on $\mathbb{F}_2$. Now we consider the almost Hermitian structures determined by setting
\begin{eqnarray}\label{eq:SU(3)structure_Flag1}
\omega & = & \frac{i}{2} (\alpha_1 \wedge \overline{\alpha}_1 + \alpha_2 \wedge \overline{\alpha}_2 + \alpha_3 \wedge \overline{\alpha}_3 ) ,
\end{eqnarray}
which always satisfies the equation
\begin{equation}\label{eq:omega^2_Is_Closed}
d \omega^2 =0.
\end{equation} 
In fact, a tedious but otherwise straightforward computation shows that $d \omega = \Re(\gamma)$, where
\begin{eqnarray}\label{eq:Gamma}
\begin{aligned}
\gamma & =  \frac{A_1^2 \epsilon_1 + A_2^2 \epsilon_2 + A_3^2 \epsilon_3}{4A_1 A_2 A_3} \frac{1}{\epsilon_1 \epsilon_2 \epsilon_3} \lbrace (\epsilon_1 \epsilon_2 \epsilon_3 + \epsilon_1 + \epsilon_2 + \epsilon_3) \alpha_{123} \\ 
&   + (\epsilon_1 \epsilon_2 \epsilon_3 + \epsilon_1 - \epsilon_2 - \epsilon_3) \overline{\alpha_1} \wedge \alpha_2 \wedge \alpha_3 + (\epsilon_1 \epsilon_2 \epsilon_3 - \epsilon_1 + \epsilon_2 - \epsilon_3) \alpha_1 \wedge \overline{\alpha_2} \wedge \alpha_3  \\ 
&   + (\epsilon_1 \epsilon_2 \epsilon_3 - \epsilon_1 - \epsilon_2 + \epsilon_3) \alpha_1 \wedge \alpha_2 \wedge \overline{\alpha_3}  \rbrace.
\end{aligned}
\end{eqnarray}
As the terms $\epsilon_i \epsilon_j \epsilon_k - \epsilon_i - \epsilon_j + \epsilon_k$ and $\epsilon_1 \epsilon_2 \epsilon_3 + \epsilon_1 + \epsilon_2 + \epsilon_3$ cannot all vanish at the same time, it follows that $\omega$ is symplectic if and only if
\begin{equation}\label{eq:Sympletic_Flag}
A_1^2 \epsilon_1 + A_2^2 \epsilon_2 + A_3^2 \epsilon_3 =0.
\end{equation}
In particular, the $\epsilon_i$ cannot all have the same sign and we conclude that $J^{ni}$ cannot tame any symplectic form. On the other hand, $J^i$ is compatible with a real $2$-parameter family of symplectic structures determined by solving \ref{eq:Sympletic_Flag}. In fact, these are all nonequivalent as for any such there are three different holomorphic spheres of different areas. These correspond to three orbits of the $\U(2)$ subgroup tangent to each of the root spaces $\mathfrak{m}_i$ at the origin, which have area $A_i^2$ and so must be different for any two such $(A_1,A_2,A_3)$ up to changes of sign. 
	
\begin{example}[K\"ahler-Einstein structure]\label{ex:Kahler-Einstein}
	The K\"ahler structure determined by $J^i$ and $\omega$ with $A_3^2=2A_1^2=2A_2^2$ can be seen to be Einstein. This is the standard homogeneous K\"ahler-Einstein Fano structure on $\mathbb{F}_2$.
\end{example}

The $3$-form on $\SU(3)$ given by
\begin{eqnarray} \label{eq:SU(3)structure_Flag2}
\Omega & = & \Omega_1 + i \Omega_2= \alpha_1 \wedge \alpha_2 \wedge \alpha_3,
\end{eqnarray}
is semi-basic and of type $(3,0)$. For $J^{ni}$, i.e. $\epsilon_1=\epsilon_2=\epsilon_3=1$, it is actually basic and so descends to a nowhere vanishing $(3,0)$-form on $\mathbb{F}_2$. In particular, $c_1(T\mathbb{F}_2, J^{ni})=0$ and $(\omega, \Omega)$ determines a $\SU(3)$-structure on $\mathbb{F}_2$ compatible with $(\omega,J^{ni})$. 

\begin{example}[Nearly K\"ahler almost complex structure]\label{ex:nearly_Kahler_str_Flag}
	For $\epsilon_1=\epsilon_2=\epsilon_3= 1$, i.e. $J^{ni}$ we have that the $3$-form $\gamma$ in \ref{eq:Gamma}, so that $d\omega = \Re (\gamma)$, is given by
	\begin{eqnarray}\nonumber
	\gamma = \frac{A_1^2  + A_2^2  + A_3^2 }{A_1 A_2 A_3} \alpha_{123} =  \frac{A_1^2  + A_2^2  + A_3^2 }{2A_1 A_2 A_3} \Omega.
	\end{eqnarray}
	Thus, any $J^{ni}$-compatible invariant almost Hermitian structure satisfies
	\begin{equation}
	d \omega = \frac{A_1^2  + A_2^2  + A_3^2 }{2A_1 A_2 A_3} \ \alpha_{123} =  \frac{A_1^2  + A_2^2  + A_3^2 }{A_1 A_2 A_3} \ \Omega_1 ,
	\end{equation}
	As a consequence, for any $(A_1,A_2,A_3)$ we have $d \Omega_1=0$, which together with equation \ref{eq:omega^2_Is_Closed} shows that any $\SU(3)$-structure in this real $3$-dimensional family is half flat. In particular, when $A_1=A_2=A_3=1$ 
	$$d\omega= 3\Omega_1 , \ d\Omega_2 = -2\omega^2 ,$$
	and so the $\SU(3)$-structure is the homogeneous nearly K\"ahler structure. 
\end{example}

\section{pHYM connections on $\U(1)$-bundles over $\mathbb{F}_2$}

In this section we prove proposition \ref{prop:U(1)_pHYM_Flag} and corollary \ref{cor:U(1)_pHYM_Flag} below which describe $\SU(3)$-invariant pseudo-holomorphic and pHYM connections on complex line bundles over $\mathbb{F}_2$.

We shall start by describing homogeneous circle bundles, invariant connections and Higgs fields. Topologically any circle bundle is determined by a class in $\mathrm{H}^2(\mathbb{F}_2, \mathbb{Z})$, the first Chern class of the complex line bundle associated with the standard representation. The Serre spectral sequence for the fibration $T^2 \rightarrow \SU(3) \rightarrow \mathbb{F}_2$ then gives an isomorphism
\begin{equation}\label{eq:Serre}
\mathrm{H}^2(\mathbb{F}_2, \mathbb{Z}) \cong \mathrm{H}^1(T^2 , \mathbb{Z}) = \Hom (\Lambda , \mathbb{Z}) ,
\end{equation}
where $\Lambda = \ker( \exp: \mathfrak{t}^2 \rightarrow T^2)$ is the weight lattice. Then, given such an integral weight $\beta \in \Hom (\Lambda , \mathbb{Z}) \subset \Hom(\mathfrak{t}^2 , \mathbb{R})$ we view it as an $1$-form in $\mathfrak{t}^2$ which we extend to $\mathfrak{su}(3)$, using the orthogonal splitting induced by the Killing form. Finally, using left invariance we further extend $\beta$ to a $1$-form in $\SU(3)$. In fact, the $\mathfrak{u}(1)=i\mathbb{R}$ valued $1$-form $i \beta$ in $\SU(3)$ is a connection on the complex line bundle 
$$L_{\beta}=\SU(3) \times_{(T^2, e^{i\beta} )} \mathbb{C}.$$
Its first Chern class can be computed from its curvature, using the identifications in equation \ref{eq:Serre}, this is once again 
$$c_1(L_{\beta}) = \frac{i}{2 \pi} [d ( i \beta )] = - \frac{1}{2 \pi} [d \beta ] \in  \mathrm{H}^2(\mathbb{F}_2, \mathbb{Z}),$$
which corresponds to $ \beta \in \Hom (\Lambda , \mathbb{Z}) $ under the isomorphism \ref{eq:Serre}.

Recall that for all almost Hermitian structures under consideration $d \omega^2=0$ and so $[\omega^2] \in H^4(\mathbb{F}_2,\mathbb{Z})$ yields a well defined cohomology class. Thus it makes sense to define 
$$\deg(L_{\beta}):= 2 \pi \langle c_1(L_{\beta}) \cup [\omega^2] , [\mathbb{F}_2] \rangle ,$$
and the slope 
$$\mu (L_{\beta}):= \frac{\deg(L_{\beta})}{\mathrm{Vol}(\mathbb{F}_2)}.$$
Writing $\beta = k \beta_1 + l \beta_2$ we have from the Maurer-Cartan equation
\begin{equation}\label{eq:d_beta}
d\beta = \frac{il}{\epsilon_1 A_1^2}  \alpha_1 \wedge \overline{\alpha}_1 - \frac{ik}{\epsilon_2 A_2^2}  \alpha_2 \wedge \overline{\alpha}_2 + \frac{i(k-l)}{\epsilon_3 A_3^2}  \alpha_3 \wedge \overline{\alpha}_3.
\end{equation}
Thus, we immediately compute that
\begin{equation}\label{eq:Degree}
\mu(L_{\beta}) = \frac{2}{3} \left( - \frac{l}{\epsilon_1 A_1^2} + \frac{k}{\epsilon_2 A_2^2 } - \frac{k-l}{\epsilon_3 A_3^2} \right) .
\end{equation}

For circle bundles, the adjoint bundle is a trivial real line bundle and so a Higgs field is simply a real valued function. Moreover, the induced connection on the adjoint bundle is also trivial. If we assume that the Higgs field is invariant under the action of $\SU(3)$, then $\Phi$ must be constant, and thus any invariant DT-instanton must actually be a pHYM connection together with a constant Higgs field. As a consequence, in considering invariant DT instantons on circle bundles, there is no loss of generality in simply considering the pHYM equations.

\begin{proposition}\label{prop:U(1)_pHYM_Flag}
	Given a complex line bundle $L_{\beta}$, the connection $i\beta$ described above is always pseudo-holomorphic. Furthermore, it is pHYM with degree $0$ if and only if 
	$$\mu(L_{\beta})=0.$$	
	If $i\beta= k i\beta_1 + l i\beta_2$ for $k,l \in \mathbb{Z}$, this condition can be equivalently written as
	$$k\epsilon_{{1}} A_1^2 ({A_{{2}}}^{2}\epsilon_{{2}}-{A_{{3}}}^{2}\epsilon_{{3}} )=l\epsilon_{{2}}{A_{{2}}}^{2} ( {A_{{1}}}^{2}
	\epsilon_{{1}}-{A_{{3}}}^{2}\epsilon_{{3}}).$$
\end{proposition}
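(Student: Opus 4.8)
The plan is to read everything off the explicit curvature formula \ref{eq:d_beta} and the slope formula \ref{eq:Degree}. First I would establish pseudo-holomorphicity: the curvature of the invariant connection $i\beta$ is $F_A = i\,d\beta$, and by \ref{eq:d_beta} this is a real linear combination of the three forms $\alpha_1\wedge\overline{\alpha}_1$, $\alpha_2\wedge\overline{\alpha}_2$, $\alpha_3\wedge\overline{\alpha}_3$, each of which is manifestly of type $(1,1)$ since the $\alpha_i$ span $\Lambda^{1,0}_{\mathbb{C}}$. Hence $F_A$ is of type $(1,1)$ and $F_A^{0,2}=0$, so $i\beta$ is pseudo-holomorphic for every $\beta$ and every almost complex structure in the family.

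Next I would handle the pHYM condition. Because $\beta$ is left-invariant, $F_A$ descends to an invariant $2$-form on $\mathbb{F}_2$, so $\Lambda F_A$ is an invariant, hence constant, function. The pHYM equation $\Lambda F_A=\lambda C$ is therefore automatically satisfied by taking $\lambda C$ to be this constant, and the assertion ``pHYM with degree $0$'' amounts to the vanishing of that constant, i.e.\ $\Lambda F_A=0$. To relate this to the slope I would use $c_1(L_\beta)=\tfrac{i}{2\pi}[F_A]$ to write
\[
\deg(L_\beta)=2\pi\,\langle c_1(L_\beta)\cup[\omega^2],[\mathbb{F}_2]\rangle = i\int_{\mathbb{F}_2}F_A\wedge\omega^2 .
\]
Since $F_A\wedge\omega^2$ is a fixed nonzero multiple of $(\Lambda F_A)\dvol$ and $\Lambda F_A$ is constant, this integral equals that same multiple of $(\Lambda F_A)\,\vol(\mathbb{F}_2)$; dividing by the volume shows $\mu(L_\beta)$ is a nonzero constant multiple of $\Lambda F_A$. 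Thus $\Lambda F_A=0$ if and only if $\mu(L_\beta)=0$, which is the first equivalence.

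Finally I would prove that $\mu(L_\beta)=0$ is equivalent to the displayed algebraic relation. Writing $\beta=k\beta_1+l\beta_2$ and using \ref{eq:Degree}, the condition $\mu(L_\beta)=0$ reads $-\tfrac{l}{\epsilon_1 A_1^2}+\tfrac{k}{\epsilon_2 A_2^2}-\tfrac{k-l}{\epsilon_3 A_3^2}=0$. Clearing denominators by multiplying through by $A_1^2A_2^2A_3^2$ and using $\epsilon_i^2=1$ to replace $1/\epsilon_i$ by $\epsilon_i$, I would collect the coefficients of $k$ and of $l$ to obtain $k\,A_1^2(\epsilon_2 A_3^2-\epsilon_3 A_2^2)=l\,A_2^2(\epsilon_1 A_3^2-\epsilon_3 A_1^2)$. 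Multiplying both sides by the nonzero factor $\epsilon_1\epsilon_2\epsilon_3$ and once more using $\epsilon_i^2=1$ turns this, up to an overall sign that cancels, into $k\epsilon_1 A_1^2(A_2^2\epsilon_2-A_3^2\epsilon_3)=l\epsilon_2 A_2^2(A_1^2\epsilon_1-A_3^2\epsilon_3)$, which is exactly the stated relation. This last step is pure bookkeeping; the only genuine point requiring care is fixing the normalization constant relating $F_A\wedge\omega^2$ to $(\Lambda F_A)\dvol$ (and the sign convention in $\Lambda F_A=\ast(F_A\wedge\omega^2)$), but since only the vanishing of $\Lambda F_A$ is used, this constant is irrelevant to the argument. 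The main ``obstacle'' is thus merely organizing the algebra so that the equivalence via the factor $\epsilon_1\epsilon_2\epsilon_3$ is recognized.
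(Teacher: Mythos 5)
Your proposal is correct and follows essentially the same route as the paper's proof: the curvature $F = i\,d\beta$ from \ref{eq:d_beta} is visibly of type $(1,1)$, the degree-zero pHYM condition reduces to the pointwise equation $F\wedge\omega^2=0$ (which, by invariance and hence constancy of $\Lambda F$, is equivalent to the vanishing of the slope computed in \ref{eq:Degree}), and the algebra with $\epsilon_i^2=1$ and the overall factor $\epsilon_1\epsilon_2\epsilon_3$ unwinds to the stated relation exactly as you describe. The only content of the paper's proof you omit is the observation, via Wang's theorem and Schur's lemma, that $i\beta$ is the \emph{unique} invariant connection on $L_\beta$; this is not needed for the statement as written, though it is used later in the paper's classification results.
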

\begin{proof}
	For $\beta= k \beta_1 + l \beta_2$ the group homomorphism $e^{i \beta} : T^2 \rightarrow \U(1)$ is $e^{i \beta}(e^{i s_1}, e^{i s_2})=e^{i(ks_1+ls_2)}$. Recall the bundle is constructed via $L_{\beta}=\SU(3) \times_{(\SU(3),  e^{i \beta})} \U(1)$. Wang's theorem \cite{Wang1958} shows that any invariant connection on $L_{\beta}$ differs from $i \beta$ by the addition of the left invariant extension of a morphism of $T^2$-representations $(\mathfrak{m}, \Ad) \rightarrow (\mathfrak{u}(1), \Ad \circ  e^{i \beta})$. As the later one is trivial and $\mathfrak{m}$ has no trivial $T^2$ components, Schur's lemma yields that $i \beta$ is the only invariant connection. Its curvature is $F_{\beta}=id\beta$ as computed in equation \ref{eq:d_beta}. We see that $F_{\beta}$ is of type $(1,1)$ and so the connections $\beta$ are all pseudo-holomorphic. For $\beta$ to be pHYM of degree $0$ we must have $F_{\beta} \wedge \omega^2=0$ which implies that
	$$k\epsilon_{{1}} A_1^2 ({A_{{2}}}^{2}\epsilon_{{2}}-{A_{{3}}}^{2}\epsilon_{{3}} )=l\epsilon_{{2}}{A_{{2}}}^{2} ( {A_{{1}}}^{2}
	\epsilon_{{1}}-{A_{{3}}}^{2}\epsilon_{{3}}).
	$$
\end{proof}

In particular, the bundle $L_{\beta}$ need not be trivial for it to admit a pHYM connection of degree $0$. In fact, the equation $k\epsilon_{{1}} A_1^2 ({A_{{2}}}^{2}\epsilon_{{2}}-{A_{{3}}}^{2}\epsilon_{{3}} )=l\epsilon_{{2}}{A_{{2}}}^{2} ( {A_{{1}}}^{2}
\epsilon_{{1}}-{A_{{3}}}^{2}\epsilon_{{3}})
$ vanishes for all $l,k$ if and only if $A_1^2 \epsilon_1 = A_2^2 \epsilon_2 = A_3^2 \epsilon_3$. For instance, the $\epsilon_i$ must all have the same sign and so the almost complex structure must be the non-integrable one $J^{ni}$ in which case we have $A_1^2  = A_2^2  = A_3^2 $ and the almost Hermitian is the nearly K\"ahler one up to scaling. This proves the following result.

\begin{corollary}\label{cor:U(1)_pHYM_Flag}
	The $\SU(3)$-structure \ref{eq:SU(3)structure_Flag1}--\ref{eq:SU(3)structure_Flag2} admits an invariant pHYM connection of degree $0$ on all complex line bundles if and only if 
	\begin{equation}\label{eq:Existence_of_U(1)_pHYM}
	A_1^2 \epsilon_1 = A_2^2 \epsilon_2 = A_3^2 \epsilon_3.
	\end{equation}
	In particular the almost Hermitian structure is the nearly K\"ahler one of example \ref{ex:nearly_Kahler_str_Flag} up to scaling.
\end{corollary}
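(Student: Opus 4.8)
The plan is to read the corollary off Proposition \ref{prop:U(1)_pHYM_Flag} essentially for free. By that proposition, for $\beta = k\beta_1 + l\beta_2$ with $k,l \in \mathbb{Z}$ the connection $i\beta$ on $L_\beta$ is always pseudo-holomorphic, and is pHYM of degree $0$ precisely when
$$k\epsilon_{1} A_1^2 (A_2^2 \epsilon_2 - A_3^2 \epsilon_3) = l\epsilon_{2} A_2^2 (A_1^2 \epsilon_1 - A_3^2 \epsilon_3).$$
Since every $\SU(3)$-homogeneous circle bundle is one of the $L_\beta$, the structure \ref{eq:SU(3)structure_Flag1}--\ref{eq:SU(3)structure_Flag2} admits an invariant degree-$0$ pHYM connection on \emph{all} complex line bundles if and only if this identity holds for every $(k,l) \in \mathbb{Z}^2$.

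First I would extract the two necessary conditions by specializing the free parameters. Taking $(k,l)=(1,0)$ forces $\epsilon_1 A_1^2(A_2^2\epsilon_2 - A_3^2\epsilon_3)=0$; since $A_1 \in \mathbb{R}^+$ and $\epsilon_1 = \pm 1$, this gives $A_2^2\epsilon_2 = A_3^2\epsilon_3$. Likewise $(k,l)=(0,1)$ yields $A_1^2\epsilon_1 = A_3^2\epsilon_3$, and combining the two produces exactly \ref{eq:Existence_of_U(1)_pHYM}. For the converse I would simply note that \ref{eq:Existence_of_U(1)_pHYM} makes both bracketed factors $(A_2^2\epsilon_2 - A_3^2\epsilon_3)$ and $(A_1^2\epsilon_1 - A_3^2\epsilon_3)$ vanish, so the linear identity holds identically in $(k,l)$. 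This establishes the equivalence.

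For the final assertion I would argue that, because each $A_i^2 > 0$, the chain $A_1^2\epsilon_1 = A_2^2\epsilon_2 = A_3^2\epsilon_3$ forces the three signs $\epsilon_i \in \{\pm 1\}$ to coincide, whence $\epsilon_1\epsilon_2 = \epsilon_1\epsilon_3 = \epsilon_2\epsilon_3 = 1$. Substituting into the integrability criterion \ref{eq:Integrable_Flag} gives $4 \neq 0$, so $J$ is non-integrable, i.e. $J = J^{ni}$ up to the Weyl group action (the all-$(-1)$ choice being the conjugate of the all-$(+1)$ one). With equal signs, \ref{eq:Existence_of_U(1)_pHYM} further forces $A_1^2 = A_2^2 = A_3^2$, and by Example \ref{ex:nearly_Kähler_str_Flag} this is precisely the homogeneous nearly Kähler structure up to an overall scaling. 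I expect no genuine obstacle here: the entire argument is a specialization-and-recombination of the single linear condition of Proposition \ref{prop:U(1)_pHYM_Flag}. The only point demanding a little care is the last step, namely confirming that "all $\epsilon_i$ equal together with all $A_i^2$ equal" lands exactly on the nearly Kähler member of Example \ref{ex:nearly_Kähler_str_Flag} (up to scaling and conjugation) rather than on some other structure in the family.
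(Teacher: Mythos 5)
Your proposal is correct and takes essentially the same route as the paper: both arguments read the corollary off Proposition \ref{prop:U(1)_pHYM_Flag} by requiring the linear condition $k\epsilon_1 A_1^2(A_2^2\epsilon_2-A_3^2\epsilon_3)=l\epsilon_2 A_2^2(A_1^2\epsilon_1-A_3^2\epsilon_3)$ to hold for all $(k,l)\in\mathbb{Z}^2$, deduce $A_1^2\epsilon_1=A_2^2\epsilon_2=A_3^2\epsilon_3$, and then observe that positivity of the $A_i^2$ forces the $\epsilon_i$ to share a sign (hence $J=J^{ni}$, non-integrable) and $A_1^2=A_2^2=A_3^2$, which is the nearly K\"ahler structure of example \ref{ex:nearly_Kahler_str_Flag} up to scaling. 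Your explicit specializations $(k,l)=(1,0)$ and $(0,1)$ merely spell out the paper's ``vanishes for all $l,k$'' step, and your check of the sign argument via \ref{eq:Integrable_Flag} is the same verification the paper leaves implicit.
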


\begin{remark}
In the K\"ahler case, follows easily from Hodge theory that a HYM  connection on a complex line bundle always exists and is unique. In that case, it has degree $0$ if and only if the bundle itself has degree $0$. In the nearly K\"ahler case, any complex line bundle has degree $0$ as $\omega^2$ is exact. Also in that case one can use Hodge theory to prove that a pHYM connection always exists and is unique (up to gauge), see \cite{Foscolo2016}.
\end{remark}

\section{DT-instantons on $\SO(3)$-bundles over $\mathbb{F}_2$}\label{sect:DTSO3}

In this section we classify $\SU(3)$-invariant pHYM connections and DT-instantons with gauge group $\SO(3)$.

The isomorphism classes of homogeneous $\SO(3)$-bundles are parametrized by group homomorphisms $\lambda:T^2 \rightarrow \SO(3)$. Thinking of $\SO(3)$ as $\SU(2)/\mathbb{Z}_2$ any such homomorphism is of the form
$$\lambda_{\beta} = \diag( e^{\frac{i}{2}\beta} , e^{-\frac{i}{2}\beta} ) \in \SU(2)/\mathbb{Z}_2 ,$$
and the corresponding homogeneous $\SO(3)$-bundle is
$$P_\beta = \SU(3) \times_{(T^2, \lambda_\beta)} \SO(3).$$

\begin{proposition}\label{prop:Basic_Monopoles_SU(3)}
	Let $\beta$ be an integral weight and $(A, \Phi_1 , \Phi_2)$ be an irreducible $\SU(3)$-invariant triple on $P_{\beta}$ whose pullback to $\SU(3)$ satisfies equations \ref{eq:mon1}--\ref{eq:mon2}. Then, $\beta$ is a root of $\SU(3)$ and $(A, \Phi_1, \Phi_2)$ is given by:
	\begin{itemize}
		\item If $\beta= r_1 = \beta_1 + 2\beta_2$, in which case $\epsilon_1 \epsilon_2 - \epsilon_2 \epsilon_3 + \epsilon_1 \epsilon_3 = 1$ and $\epsilon_1 \mu(L_{r_1})<0$, i.e. $A_1^2 A_3^2 \frac{\epsilon_1}{\epsilon_2} + A_1^2 A_2^2 \frac{\epsilon_1}{\epsilon_3} < 2 A_2^2 A_3^2 $. In this case 
		\begin{align}\nonumber
		& \Phi_1  =  0, \\ \nonumber
		& A  =  r_1 \otimes \frac{T_1}{2} \pm \sqrt{ 1 - \frac{A_1^2}{2 A_2^2} \frac{\epsilon_1}{\epsilon_2} - \frac{A_1^2}{2 A_3^2} \frac{\epsilon_1}{\epsilon_3} } \ (\eta_1 \otimes T_2 - \theta_1 \otimes T_3),\\ \nonumber
		& \Phi_2  = - \frac{A_1}{A_2 A_3} \frac{\epsilon_2 \epsilon_3 + 1}{\epsilon_2 \epsilon_3} T_1.
		\end{align}
		
		\item If $\beta= r_2 =-2\beta_1 - \beta_2$, in which case $\epsilon_2 \epsilon_3 - \epsilon_3 \epsilon_1 + \epsilon_2 \epsilon_1 = 1$ and $\epsilon_2 \mu(L_{r_2})<0$, i.e. $A_2^2 A_1^2 \frac{\epsilon_2}{ \epsilon_3} + A_2^2 A_3^2 \frac{\epsilon_2}{ \epsilon_1} < 2 A_3^2 A_1^2 $. In this case 
		\begin{align}\nonumber
		& \Phi_1  =  0, \\ \nonumber
		& A  =  r_2 \otimes \frac{T_1}{2} \pm \sqrt{ 1 - \frac{A_2^2}{2 A_3^2} \frac{\epsilon_2}{\epsilon_3} - \frac{A_2^2}{2 A_1^2} \frac{\epsilon_2}{\epsilon_1} } \ (\eta_2 \otimes T_2 - \theta_2 \otimes T_3),\\ \nonumber
		& \Phi_2  = - \frac{A_2}{A_3 A_1} \frac{\epsilon_3 \epsilon_1 + 1}{\epsilon_3 \epsilon_1} T_1.
		\end{align}
		
		\item If $\beta= r_3 = \beta_1 - \beta_2$, in which case $\epsilon_3 \epsilon_1 - \epsilon_1 \epsilon_2 + \epsilon_3 \epsilon_2 = 1$ and $\epsilon_3 \mu(L_{r_3})<0$, i.e. $A_3^2 A_2^2 \frac{\epsilon_3}{ \epsilon_1} + A_3^2 A_1^2 \frac{\epsilon_3}{ \epsilon_2} < 2 A_1^2 A_2^2$. In this case 
		\begin{align}\nonumber
		& \Phi_1  =  0, \\ \nonumber
		& A  =  r_3 \otimes \frac{T_1}{2} \pm \sqrt{ 1 - \frac{A_3^2}{2 A_1^2} \frac{\epsilon_3}{\epsilon_1} - \frac{A_3^2}{2 A_2^2} \frac{\epsilon_3}{\epsilon_2} } \ (\eta_3 \otimes T_2 - \theta_3 \otimes T_3),\\ \nonumber
		& \Phi_2  = - \frac{A_3}{A_1 A_2} \frac{\epsilon_1 \epsilon_2 + 1}{\epsilon_1 \epsilon_2} T_1.
		\end{align}
	\end{itemize}
	Moreover, when equality, rather than strict inequality, holds in any of the above cases the corresponding DT instanton becomes reducible and $A$ is one of the pHYM connections from proposition \ref{prop:U(1)_pHYM_Flag}.
\end{proposition}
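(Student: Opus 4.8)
The plan is to turn the PDE problem into a finite algebraic one using invariance, and then solve the resulting system. By Wang's theorem an $\SU(3)$-invariant connection on $P_\beta$ is determined by the isotropy homomorphism $\lambda_\beta$ together with an $(\Ad,T^2)$-equivariant map $\Lambda\co\mathfrak{m}\to\mathfrak{so}(3)$, where $T^2$ acts on $\mathfrak{so}(3)=\langle T_1\rangle\oplus\langle T_2,T_3\rangle$ via $\Ad\circ\lambda_\beta$, with $\langle T_1\rangle$ fixed and $\langle T_2,T_3\rangle\cong\mathbb{C}$ of weight $\beta$. Decomposing $\mathfrak{m}=\mathfrak{m}_1\oplus\mathfrak{m}_2\oplus\mathfrak{m}_3$ into root spaces and applying Schur's lemma, the only possible nonzero component of $\Lambda$ maps some $\mathfrak{m}_j$ into $\langle T_2,T_3\rangle$, and this requires $\beta=\pm r_j$. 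As any invariant $\Phi_1,\Phi_2$ must land in the weight-zero space $\langle T_1\rangle$, an irreducible triple forces $\Lambda\neq0$, hence $\beta$ to be a root. After using the residual constant gauge freedom to normalize the off-diagonal part, the ansatz becomes $A=r_j\otimes\frac{T_1}{2}+t\,(\eta_j\otimes T_2-\theta_j\otimes T_3)$ with $t\in\mathbb{R}$, and $\Phi_1=\phi_1T_1$, $\Phi_2=\phi_2T_1$.

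I would then compute $F_A$ and $d_A\Phi_i$ from the Maurer--Cartan equations and substitute into \ref{eq:mon1}--\ref{eq:mon2}. Solving the resulting algebraic system gives $\phi_1=0$ (this also follows a priori from the vanishing argument of Proposition \ref{prop:Half_Flat_Vanishing} in the half-flat cases, cf. Example \ref{ex:nearly_Kahler_str_Flag}); equation \ref{eq:mon2} reads $F_A\wedge\frac{\omega^2}{2}=[\Phi_1,\Phi_2]\frac{\omega^3}{3!}=0$, and together with the components of \ref{eq:mon1} it fixes $\phi_2$, imposes the stated sign relation among the $\epsilon_i$, and yields the quadratic
\begin{equation*}
t^2=1-\frac{A_1^2}{2A_2^2}\frac{\epsilon_1}{\epsilon_2}-\frac{A_1^2}{2A_3^2}\frac{\epsilon_1}{\epsilon_3}
\end{equation*}
in the case $j=1$ (the cases $j=2,3$ following by the cyclic symmetry $\sigma$). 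I expect the curvature bookkeeping and the extraction of these $\epsilon$-constraints to be the main obstacle, everything else being formal.

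It remains to interpret the inequality. Using the formula \ref{eq:Degree} for $\mu(L_{r_j})$ one verifies that the radicand is a positive multiple of $-\epsilon_j\mu(L_{r_j})$; for $j=1$,
\begin{equation*}
1-\frac{A_1^2}{2A_2^2}\frac{\epsilon_1}{\epsilon_2}-\frac{A_1^2}{2A_3^2}\frac{\epsilon_1}{\epsilon_3}=-\frac{3A_1^2}{4}\,\epsilon_1\,\mu(L_{r_1}).
\end{equation*}
Since $A_j>0$, a real $t$ exists iff $\epsilon_j\mu(L_{r_j})\le0$, and the solution is irreducible exactly when $t\neq0$, i.e. under the strict inequality. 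Finally, for the last assertion, equality makes the radicand vanish, so $t=0$ and $A=r_j\otimes\frac{T_1}{2}$ takes values in the abelian subalgebra $\langle T_1\rangle$. Then $T_1$ is a nontrivial $d_A$-parallel section of $\mathfrak{g}_{P_{r_j}}$, as $d_AT_1=[A,T_1]=\tfrac12\,r_j\otimes[T_1,T_1]=0$, so the holonomy lies in the centralizer $\U(1)\subset\SO(3)$ of $T_1$ and $A$ reduces to $L_{r_j}\hookrightarrow P_{r_j}$; hence the DT-instanton is reducible. Under the degree-one embedding $\U(1)\hookrightarrow\SO(3)$ defining $\lambda_\beta$, this reduced connection is precisely $i\beta=r_j$ on $L_{r_j}$, which by Proposition \ref{prop:U(1)_pHYM_Flag} is pHYM of degree $0$ (since equality means $\mu(L_{r_j})=0$), as claimed.
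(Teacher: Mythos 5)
Your proposal follows essentially the same route as the paper's proof: Wang's theorem plus Schur's lemma to force $\beta$ to be a root, the one-parameter ansatz $A = r_j\otimes\frac{T_1}{2} + t\,(\eta_j\otimes T_2 - \theta_j\otimes T_3)$ with constant Higgs fields along $T_1$, reduction of \ref{eq:mon1}--\ref{eq:mon2} to an algebraic system giving $\phi_1=0$, the compatibility constraint on the $\epsilon_i$, the quadratic for $t^2$, and the identification of the radicand with $-\frac{3A_1^2}{4}\epsilon_1\mu(L_{r_1})$ --- all of which agree with the paper, which likewise treats $r_2,r_3$ via the cyclic Weyl element $\sigma$. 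The only (cosmetic) differences are that you make explicit the constant gauge rotation normalizing $t\in\mathbb{R}$ and the reduction of the holonomy to $\U(1)$ at equality, both of which the paper leaves implicit, while you leave the Maurer--Cartan curvature bookkeeping asserted rather than carried out.
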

\begin{proof}
	For each $\beta = k \beta_1 + l \beta_2$, with $(k,l) \in \mathbb{Z}^2$, the group homomorphism $\lambda_{\beta}$ is given by 
	$$\lambda_{\beta}(e^{is_1}, e^{is_2}) = \diag(e^{i(ks_1+ls_2)} , e^{-i(ks_1+ls_2)}) \in \SU(2)/\mathbb{Z}_2 .$$
	The canonical invariant connection $A_{\beta}^c= d \lambda_{\beta} \oplus 0$ on $P_{k,l}$ is determined by the $1$-form in $\SU(3)$ with values in $\mathfrak{so}(3)$ given by
	$$A_{\beta}^c = \beta \otimes \frac{T_1}{2} =  \left( k \beta_1 + l \beta_2 \right) \otimes \frac{T_1}{2}.$$
	By Wang's theorem, other invariant connections on $P_{\beta}$ are determined by morphisms of $T^2$-representations $\Lambda_{\beta}: (\mathfrak{m}, \Ad) \rightarrow (\mathfrak{so}(3), \Ad \circ \lambda_{\beta})$. The left and right hand sides respectively decompose into irreducible components as $\mathbb{C}_{r_1} \oplus \mathbb{C}_{r_2} \oplus \mathbb{C}_{r_3}$ and $\mathbb{R} \oplus \mathbb{C}_{\beta}$. Hence, other invariant connections exist only in the case when $\beta = r_i$ for some $i =1,2,3$. In all other cases the canonical invariant connection is the unique one, and the Ambrose-Singer theorem shows that any such connection is reducible, so that we would be back in the case analysed in proposition \ref{prop:U(1)_pHYM_Flag}. It is then enough to restrict ourselves to the three cases above. We shall start with the case $\beta= k \beta_1 + l \beta_2 = r_1$ so $k=1$ and $l=2$. In this case the most general invariant connection can be written as
	\begin{equation}\label{eq:Invariant_Connection_F2}
	A= A_{r_1}^c + a (\eta_1 \otimes T_2 - \theta_1 \otimes T_3),
	\end{equation}
	for $a \in \mathbb{R}$. Its curvature can be computed as $F_A=dA + \frac{1}{2} [A \wedge A ]$ and we can write it as $F=F_1 \otimes T_1+ F_2 \otimes T_2 + F_3 \otimes T_3$ with
	\begin{eqnarray}\nonumber
	F_1 & = & \frac{i}{\epsilon_1 A_1^2} (1- a^2) \alpha_1 \wedge \overline{\alpha}_1 - \frac{i}{2 \epsilon_2 A_2^2} \alpha_2 \wedge \overline{\alpha}_2 - \frac{i }{2 \epsilon_3 A_3^2} \alpha_3 \wedge \overline{\alpha}_3    \\ \nonumber
	F_2 & = &  - \frac{ a(\epsilon_2 + \epsilon_3)}{ 2\epsilon_2 \epsilon_3 A_2 A_3} \Im( \alpha_2 \wedge \alpha_3) + \frac{a( \epsilon_2 -\epsilon_3 )}{2 \epsilon_2 \epsilon_3 A_2 A_3} \Im( \alpha_2 \wedge \overline{\alpha}_3 ) \\ \nonumber
	F_3 & = & \frac{a(1+ \epsilon_2 \epsilon_3)}{2 \epsilon_2 \epsilon_3 A_2 A_3} \Re ( \alpha_2 \wedge \alpha_3 ) + \frac{a( \epsilon_2 \epsilon_3 -1)}{2 \epsilon_2 \epsilon_3 A_2 A_3} \Re ( \alpha_2 \wedge \overline{\alpha}_3 ) .
	\end{eqnarray}
	Again, it follows from the Ambrose-Singer theorem that for such a connection to be irreducible we need $a \neq 0$. We turn now to invariant Higgs fields $\Phi \in \Omega^0(\mathbb{F}_2 , \Ad(\mathfrak{g}_{P_{r_1}}))$. We view these as functions in $\SU(3)$ with values in $\mathfrak{g} \cong \mathfrak{su}(2)$, equivariant with respect to the action of $T^2 \subset \SU(3)$ on $\SU(3)$ by multiplication on the right and on $\mathfrak{su}(2)$ via $\Ad \circ \lambda_{r_1}$. For $\Phi$ to be left-invariant it must be constant, and so valued in the trivial component of $\mathfrak{su}(2) \cong \mathbb{R} \oplus \mathbb{C}_{r_1}$. Thus, we may write our two invariant Higgs fields as $\Phi_i = - \phi_i T_1$, for $i=1,2$, where $\phi_1$, $\phi_2$ are real numbers. Their covariant derivative with respect to the connection $A$, as in equation \ref{eq:Invariant_Connection_F2}, can be computed to be 
	\begin{eqnarray}\nonumber
	d_A \Phi_i & = & d\Phi_i + [A, \Phi_i] \\ \nonumber 
	& = & \frac{2a\phi_i}{\epsilon_1 A_1} \Im(\alpha_1) \otimes T_2 + \frac{2a\phi_i}{A_1} \Re(\alpha_1) \otimes T_3.
	\end{eqnarray}

	Now recall the DT instanton equations \ref{eq:mon1}--\ref{eq:mon2}, which we rewrite here for convenience
	\begin{eqnarray}\label{eq:mon1_Recall}
	\ast d_A \Phi_1 & = & F_A \wedge \Omega_1 - d_A \Phi_2 \wedge \frac{\omega^2}{2} \\ \label{eq:mon2_Recall}
	F_A \wedge \frac{\omega^2}{2} & = & [\Phi_1 , \Phi_2] \frac{\omega^3}{3!}.
	\end{eqnarray}
	We start with the second equation \ref{eq:mon2_Recall}. In our case we have that $[\Phi_1, \Phi_2]=0$ and so the equation turns into $F_A \wedge \omega^2 =0$. The component of $F_1 \wedge \omega^2 =0$ is given by
	$$ 2A_2^2 A_3^2 \epsilon_2 \epsilon_3 a^2 + A_1^2 A_2^2 \epsilon_1 \epsilon_2 +A_1^2 A_3^2 \epsilon_1 \epsilon_3-2 A_2^2 A_3^2 \epsilon_2 \epsilon_3 =0$$
	while the equations $F_2 \wedge \omega^2 =0$ and $F_3 \wedge \omega^2 =0$ hold automatically. It then follows that we must have
	$$\frac{a^2}{\epsilon_1 A_1^2} = \frac{1}{\epsilon_1 A_1^2} - \frac{1}{2 \epsilon_2 A_2^2}  - \frac{1}{2 \epsilon_3 A_3^2} = - \frac{3}{4} \mu(L_{r_1}) ,$$
	and so a solution exists if and only if $ -\epsilon_1 \mu(L_{r_1}) >0 $, in which case
	\begin{equation}\label{eq:a_proof_Monopole_main_thm}
	a = \pm \sqrt{-\frac{3 \epsilon_1 A_1^2}{4} \mu(L_{r_1})}.
	\end{equation} 
	Moreover, by the previous comment, the resulting connection is irreducible if and only if strict inequality holds.
	
	We now turn to the first of the DT instaton equations, i.e. equation \ref{eq:mon1_Recall}. To compute the right hand side we must start by computing the Hodge star operator. Given that in the frame $\lbrace \Re(\alpha_i), \Im(\alpha_i) \rbrace_{i=1}^3$ and that $\omega^3/3!= \Re(\alpha_1) \wedge \Im(\alpha_1) \wedge \ldots \wedge \Im(\alpha_3)$, we compute that $\ast \Re(\alpha_1)= \Im(\alpha_1) \wedge \ldots \wedge \Re(\alpha_3) \wedge \Im(\alpha_3)$ and $\ast \Im(\alpha_1) = - \Re(\alpha_1) \wedge \Re(\alpha_2) \wedge \ldots \wedge \Im(\alpha_3)$. Thus, the left hand side of equation \ref{eq:mon1_Recall} is
	\begin{eqnarray}\nonumber
	\ast d_A \Phi_1 & = & - \frac{2a\phi_1}{\epsilon_1 A_1} \Re(\alpha_1) \wedge \Re(\alpha_2) \wedge \Im(\alpha_2) \wedge \Re(\alpha_3) \wedge \Im(\alpha_3) \otimes T_2 \\ \nonumber
	&  & + \frac{2a\phi_1}{ A_1} \Im(\alpha_1) \wedge \Re(\alpha_2) \wedge \Im(\alpha_2) \wedge \Re(\alpha_3) \wedge \Im(\alpha_3) \otimes T_3 \\ \nonumber
	& = & - \frac{a\phi_1}{2\epsilon_1 A_1} \Re(\alpha_1) \wedge \alpha_2 \wedge \overline{\alpha}_2 \wedge \alpha_3 \wedge \overline{\alpha}_3 \otimes T_2 \\ \nonumber
	&  & + \frac{a\phi_1}{2 A_1} \Im(\alpha_1) \wedge \alpha_2 \wedge \overline{\alpha}_2 \wedge \alpha_3 \wedge \overline{\alpha}_3 \otimes T_3 .
	\end{eqnarray}
	As for the right hand side, i.e. $F_A \wedge \Omega_1 - d_A \Phi_2 \wedge \omega^2/2$, the component along $T_1$ is simply $F_1 \wedge \Omega_1$ which vanishes identically, while the other components are
	\begin{align}\nonumber
	& F_A \wedge \Omega_1  -  d_A \Phi_2 \wedge \frac{\omega^2}{2}  =  \\ \nonumber
	& = \frac{a( - 2 A_{{2}}A_{{3}}  \epsilon_{{2}}\epsilon_{{3}}\phi_{{2}}+A_{{1}} \epsilon_1 (\epsilon_2 + \epsilon_3) )}{4 \epsilon_1 \epsilon_2 \epsilon_3 A_1 A_2 A_3} \Im(\alpha_1) \wedge \alpha_2 \wedge \overline{\alpha}_2 \wedge \alpha_3 \wedge \overline{\alpha}_3 \otimes T_2 \\ \nonumber
	& + \frac{a( -2 A_{{2}}A_{{3}} \epsilon_1 \epsilon_{{2}}\epsilon_{{3}}\phi_{{2}}+A_{{1}} \epsilon_{{1}} (\epsilon_{{2}}\epsilon_{{3}}+1) )}{4 \epsilon_1 \epsilon_2 \epsilon_3 A_1 A_2 A_3} \Re(\alpha_1) \wedge \alpha_2 \wedge \overline{\alpha}_2 \wedge \alpha_3 \wedge \overline{\alpha}_3 \otimes T_3.
	\end{align}
	So the remaining equations turn into
	\begin{align*}\nonumber
	& a \phi_1 = 0 , \\ \nonumber
	& a( - 2 A_{{2}}A_{{3}}  \epsilon_{{2}}\epsilon_{{3}}\phi_{{2}}+A_{{1}} \epsilon_1 (\epsilon_2 + \epsilon_3) ) = 0 , \\ \nonumber
	& a( -2 A_{{2}}A_{{3}} \epsilon_1 \epsilon_{{2}}\epsilon_{{3}}\phi_{{2}}+A_{{1}} \epsilon_{{1}} (\epsilon_{{2}}\epsilon_{{3}}+1) ) = 0.
	\end{align*}
	Then, either:
	\begin{itemize}
		\item $a=0$ and $\phi_1 \in \mathbb{R}$ is free to choose in which case the connection is reducible to $\U(1) \subset \SO(3)$ and we are back in the case analyzed in the previous section (see proposition \ref{prop:U(1)_pHYM_Flag} and corollary \ref{cor:U(1)_pHYM_Flag}).
		
		\item or $\phi_1=0$, in which case we can then impose the remaining equations. When the connection $A$ is irreducible, i.e. when $a \neq 0$ is given by equation \ref{eq:a_proof_Monopole_main_thm}, the two remaining equations above are compatible if and only if
		$$\epsilon_1 \epsilon_2 - \epsilon_2 \epsilon_3 + \epsilon_1 \epsilon_3 = 1.$$
		In that case we can solve for $\phi_2$ yielding
		$$\phi_2 = \frac{A_1}{A_2 A_3} \frac{\epsilon_1 (\epsilon_2 +\epsilon_3)}{\epsilon_2 \epsilon_3} = \frac{A_1}{A_2 A_3} \frac{\epsilon_2 \epsilon_3 + 1}{\epsilon_2 \epsilon_3}.$$
		The cases when $\beta$ is either $r_2$ or $r_3$ are similar and in fact can be obtained from this one by applying $\sigma$, the element of order $3$ in the Weyl group of $\SU(3)$.
	\end{itemize} 
\end{proof}

\begin{remark}\label{rk:GaugeEquiv}
	Notice that in each case the two connections differing from a choice of sign in $a = \pm \sqrt{-3 \epsilon_i A_i^2 \mu(L_{r_i}) / 4}$ are actually gauge equivalent. The gauge transformation exchanging these is given by $\exp(\frac{\pi}{2} T_1)$.
\end{remark}

\subsection{pHYM connections}

We shall start by using the results of proposition \ref{prop:Basic_Monopoles_SU(3)} to analyse the existence of $\SU(3)$-invariant pHYM connections with structure group $\SO(3)$. We shall use these same results to analyse the existence of DT-Instantons and compare them with those we now obtain for HYM connections. Indeed, as we will now see, the existence of irreducible pHYM connections implies the almost complex structure is actually integrable and so the pHYM connections are actually HYM.

\begin{proposition}
	Let $\beta$ be an integral weight and $A$ an irreducible pHYM connection on $P_{\beta}$ for $\mathbb{F}_2$ equipped with an invariant almost Hermitian structure. Then, the almost complex structure is in fact integrable and either 
	\begin{itemize}
		\item $\beta= r_1$, in which case $\epsilon_2 = \pm 1$, $\epsilon_3= \mp 1$ and $\epsilon_1 \mu(L_{r_1})<0$, i.e. $ \mp \epsilon_1 A_1^2 A_2^2 \pm \epsilon_1 A_1^2 A_3^2  < 2 A_2^2 A_3^2 $, and
		\begin{equation}\nonumber
		A  =  r_1 \otimes \frac{T_1}{2} \pm \sqrt{ 1 \mp \epsilon_1 A_1^2 \left( \frac{1}{2 A_2^2}  - \frac{1}{2 A_3^2} \right)  } \ (\eta_1 \otimes T_2 - \theta_1 \otimes T_3) .
		\end{equation}
		
		\item $\beta= r_2$, in which case $\epsilon_3=\pm 1$ and $\epsilon_1 = \mp 1$ and $\epsilon_2 \mu(L_{r_2})<0$, i.e.  $\pm  \epsilon_2 A_2^2 A_1^2 \mp  \epsilon_2 A_2^2 A_3^2  < 2 A_3^2 A_1^2 $, and
		\begin{align}\nonumber
		& A  =  r_2 \otimes \frac{T_1}{2} \pm \sqrt{ 1 \mp \epsilon_2 A_2^2 \left( \frac{1}{2 A_3^2} - \frac{1}{2 A_1^2} \right)  } \ (\eta_2 \otimes T_2 - \theta_2 \otimes T_3).
		\end{align}
		
		\item If $\beta= r_3$, in which case $\epsilon_1 = \pm 1$, $\epsilon_2 = \mp 1$ and $\epsilon_3 \mu(L_{r_3})<0$, i.e.  $\pm \epsilon_3 A_3^2 A_2^2  \mp \epsilon_3 A_3^2 A_1^2  < 2 A_1^2 A_2^2 $. In this case 
		\begin{align}\nonumber
		& A  =  r_3 \otimes \frac{T_1}{2} \pm \sqrt{ 1 \mp \epsilon_3 A_3^2 \left( \frac{1}{2 A_1^2} - \frac{1}{2 A_2^2} \right)  } \ (\eta_3 \otimes T_2 - \theta_3 \otimes T_3).
		\end{align}
	\end{itemize}
	Moreover, when equality, rather than strict inequality, holds in any of the above cases the connection $A$ becomes reducible.
\end{proposition}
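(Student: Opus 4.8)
The plan is to read an irreducible $\SU(3)$-invariant pHYM connection as a degenerate DT-instanton and feed it into Proposition~\ref{prop:Basic_Monopoles_SU(3)}. Since $\SO(3)$ is simple, its Lie algebra has trivial centre, so for an irreducible connection the pHYM normalisation $\Lambda F_A=\lambda C$ forces $\Lambda F_A=0$; thus $A$ is pHYM exactly when $F_A^{0,2}=0$ and $F_A\wedge\omega^2=0$. I would first note that these two conditions say precisely that the triple $(A,\Phi_1,\Phi_2)=(A,0,0)$ solves \ref{eq:mon1}--\ref{eq:mon2}: the equation $F_A\wedge\tfrac{\omega^2}{2}=0$ is \ref{eq:mon2} with vanishing commutator, while pseudo-holomorphicity makes $F_A$ of type $(1,1)$ (it is real, so $F_A^{2,0}=\overline{F_A^{0,2}}=0$), whence $F_A\wedge\Omega=0=F_A\wedge\overline\Omega$ for type reasons and therefore $F_A\wedge\Omega_1=0$, which is \ref{eq:mon1} with $\Phi_1=\Phi_2=0$. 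Crucially this reasoning is purely type-theoretic and needs no trivialisation of the canonical bundle, so it is available for every invariant almost complex structure, integrable or not.

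With this in hand the classification of Proposition~\ref{prop:Basic_Monopoles_SU(3)} applies verbatim. It forces $\beta$ to be a root; treating $\beta=r_1$ (the cases $r_2,r_3$ following by the order-three Weyl element $\sigma$), the only irreducible invariant solution has $\Phi_1=0$ and
$$\Phi_2=-\frac{A_1}{A_2A_3}\,\frac{\epsilon_2\epsilon_3+1}{\epsilon_2\epsilon_3}\,T_1.$$
Since our connection is genuinely pHYM we must have $\Phi_2=0$, and this holds if and only if $\epsilon_2\epsilon_3+1=0$, i.e.\ $\epsilon_2=\pm1$, $\epsilon_3=\mp1$. This is the step I expect to carry the real content: substituting $\epsilon_2\epsilon_3=-1$ (hence $\epsilon_2+\epsilon_3=0$) into the integrability criterion \ref{eq:Integrable_Flag} gives $\epsilon_1(\epsilon_2+\epsilon_3)+\epsilon_2\epsilon_3+1=0$, so $J$ is forced to be integrable. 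Equivalently, at the level of curvature, the $(0,2)$-parts of the $T_2$- and $T_3$-components of $F_A$ carry the factors $a(\epsilon_2+\epsilon_3)$ and $a(1+\epsilon_2\epsilon_3)$; with $a\neq0$ (irreducibility) their vanishing is exactly $\epsilon_2\epsilon_3=-1$, so pseudo-holomorphicity itself is what pins down the sign pattern and thus integrability.

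It then remains to record the remaining data, which is immediate from Proposition~\ref{prop:Basic_Monopoles_SU(3)}. Imposing $F_A\wedge\omega^2=0$ determines $a$ through $a^2=-\tfrac{3}{4}\epsilon_1A_1^2\,\mu(L_{r_1})$, which with $\epsilon_2=\pm1,\ \epsilon_3=\mp1$ becomes $a^2=1\mp\epsilon_1A_1^2\bigl(\tfrac{1}{2A_2^2}-\tfrac{1}{2A_3^2}\bigr)$; a real irreducible solution exists if and only if this is positive, i.e.\ if and only if $\epsilon_1\mu(L_{r_1})<0$, yielding the stated connection, and the two signs of $a$ are gauge equivalent by Remark~\ref{rk:GaugeEquiv}. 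When equality holds the unique solution is $a=0$, so $A$ reduces to $\U(1)\subset\SO(3)$ and coincides with one of the connections of Proposition~\ref{prop:U(1)_pHYM_Flag}, giving the reducibility statement. Applying $\sigma$ cyclically produces the $r_2$ and $r_3$ cases with the indices permuted, completing the proof.
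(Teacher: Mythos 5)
Your proposal is correct and follows essentially the same route as the paper: both treat an irreducible pHYM connection as the $\Phi_1=\Phi_2=0$ subcase of Proposition~\ref{prop:Basic_Monopoles_SU(3)}, extract $\epsilon_2\epsilon_3=-1$ (and its Weyl images) from the vanishing of $\Phi_2$, deduce integrability from \ref{eq:Integrable_Flag}, and read off the connection and the slope inequality. Your added justifications --- triviality of the centre of $\mathfrak{so}(3)$ forcing $\Lambda F_A=0$, and the type argument giving $F_A\wedge\Omega_1=0$ on $\SU(3)$ without requiring $\Omega$ to be basic --- are correct details that the paper leaves implicit.
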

\begin{proof}
	The computations for an invariant pHYM connection are contained, as a subcase, in the DI-instanton ones. They correspond to the DT-instantons for which $\Phi_1=0=\Phi_2$. As can be seen from the statement of proposition \ref{prop:Basic_Monopoles_SU(3)}, this happens if and only if $\epsilon_i \epsilon_j =-1$ for some $i,j \in \lbrace 1,2,3 \rbrace$.  In the proof we shall only deal with the case of $\epsilon_3 = - \epsilon_2^{-1}$ as the other ones follow similar lines. Then, the condition that $\Phi_2$ vanishes in proposition \ref{prop:Basic_Monopoles_SU(3)} yields that $\beta=r_1$, while the condition that $\epsilon_1 \epsilon_2 - \epsilon_2 \epsilon_3 + \epsilon_1 \epsilon_3 = 1$ turns into $\epsilon_1 (\epsilon_2 - \epsilon_2^{-1})=0$ and so $\epsilon_2 = \pm 1$ with $\epsilon_3 = \mp 1$ respectively. Inserting this into the inequality involving the metric structure, i.e. the $A_i$, we must have $ \pm \epsilon_1 A_1^2 A_3^2  \mp \epsilon_1 A_1^2 A_2^2  <  2 A_2^2 A_3^2 $, which is the inequality in the statement.\\
	All the almost complex structures to which this result applies, i.e. those satisfying $\epsilon_i= \pm 1$, $\epsilon_j = \mp 1$ and $\epsilon_k \in \mathbb{R}$, are in fact integrable. Indeed, it is easy to check that for any such, the Nijenhuis tensor computed in equation \ref{eq:Nijenhuis_Flag}, vanishes identically.
\end{proof}

Using the action of the Weyl group we may fix the invariant integrable complex structure to be $J^i$, i.e. $(\epsilon_1,\epsilon_2,\epsilon_3)=(1,1,-1)$, and so Theorem \ref{thm:HYM_Integrable_Flag} implies that for irreducible invariant HYM to exist on $P_{\beta}$, $\beta$ must either be $r_1$ or $r_2$ and substituting into the previous theorem we obtain the following.

\begin{theorem}\label{thm:HYM_Integrable_Flag}
	For any invariant Hermitian structure $(g,J)$ on $\mathbb{F}_2$ there are, up to gauge, at most two invariant irreducible HYM connections with gauge group $\SO(3)$. Using the action of the Weyl group so that $J=J^i$, these are the following:
	\begin{itemize}
		\item The connection
		\begin{equation}\nonumber
		A  =  r_1 \otimes \frac{T_1}{2} \pm \sqrt{ 1 -  A_1^2 \left( \frac{1}{2 A_2^2}  - \frac{1}{2 A_3^2} \right)  } \ (\eta_1 \otimes T_2 - \theta_1 \otimes T_3) ,
		\end{equation} 
		on $P_{r_1}$ which exists in case $\mu(L_{r_1})<0$, i.e. $ A_1^2 ( A_3^2 -A_2^2)  < 2 A_2^2 A_3^2 $.
		
		\item The connection
		\begin{align}\nonumber
		& A  =  r_2 \otimes \frac{T_1}{2} \pm \sqrt{ 1 + A_2^2 \left( \frac{1}{2 A_3^2} - \frac{1}{2 A_1^2} \right)  } \ (\eta_2 \otimes T_2 - \theta_2 \otimes T_3),
		\end{align}
		on $P_{r_2}$, which exists in case $\mu(L_{r_2})<0$, i.e. $A_2^2 (A_3^2 -A_1^2)  < 2 A_3^2 A_1^2 $.
	\end{itemize}
	Moreover, when equality, rather than strict inequality, holds in any of the above cases the connection $A$ becomes reducible.
\end{theorem}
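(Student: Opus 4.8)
The plan is to obtain Theorem~\ref{thm:HYM_Integrable_Flag} as a specialization of the preceding proposition, since an invariant irreducible HYM connection is exactly an invariant irreducible pHYM connection (the structure being integrable) and hence an invariant irreducible DT-instanton with $\Phi_1=\Phi_2=0$. Proposition~\ref{prop:Basic_Monopoles_SU(3)} already shows that such a connection forces $\beta$ to be a root $r_1,r_2$ or $r_3$, and for each root it records the admissible sign pattern of $(\epsilon_1,\epsilon_2,\epsilon_3)$, the explicit connection, and the inequality $\epsilon_i\mu(L_{r_i})<0$; for any non-root $\beta$ the Wang/Ambrose--Singer argument in that proof already rules out irreducible invariant connections. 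The first step is therefore to remove the remaining freedom in $J$: by the Weyl-group symmetry every invariant integrable almost complex structure is equivalent to $J^i$, for which $(\epsilon_1,\epsilon_2,\epsilon_3)=(1,1,-1)$, and since the order-three element $\sigma$ cyclically permutes $r_1,r_2,r_3$ together with the data $A_i$, it suffices to read off the answer for $J=J^i$ and transport the general case by $\sigma$.

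Next I would substitute $(\epsilon_1,\epsilon_2,\epsilon_3)=(1,1,-1)$ into the three cases of the proposition. The pHYM locus inside Proposition~\ref{prop:Basic_Monopoles_SU(3)} is $\Phi_2=0$, which in the $r_i$ case demands $\epsilon_j\epsilon_k=-1$ for the complementary pair $\{j,k\}$. For $J^i$ one has $\epsilon_2\epsilon_3=\epsilon_1\epsilon_3=-1$ but $\epsilon_1\epsilon_2=+1$, so the conditions are met precisely for $\beta=r_1$ and $\beta=r_2$ and fail for $\beta=r_3$; equivalently, the $r_3$ solution keeps $\Phi_2\neq0$ and is a genuine DT-instanton rather than an HYM connection. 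Matching the $\pm/\mp$ conventions, $(\epsilon_2,\epsilon_3)=(1,-1)$ selects the top branch in the $r_1$ case and $(\epsilon_3,\epsilon_1)=(-1,1)$ selects the bottom branch in the $r_2$ case; inserting these into the square root and into $\epsilon_i\mu(L_{r_i})<0$ reproduces verbatim the two connections and the two existence inequalities $A_1^2(A_3^2-A_2^2)<2A_2^2A_3^2$ and $A_2^2(A_3^2-A_1^2)<2A_3^2A_1^2$ of the theorem.

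Finally, the count ``at most two, up to gauge'' follows because Remark~\ref{rk:GaugeEquiv} identifies the two sign choices of $a=\pm\sqrt{\,\cdots\,}$ within each root case as gauge equivalent, so each of the two surviving roots contributes a single gauge class and no non-root $\beta$ contributes any; the clause that equality (rather than strict inequality) forces reducibility is inherited directly from the proposition. Integrability is built into the Hermitian hypothesis, and for $(1,1,-1)$ it can be double-checked by the vanishing of the Nijenhuis tensor~\ref{eq:Nijenhuis_Flag}. I expect no conceptual difficulty here; the one place demanding care is the consistent bookkeeping of the interlocking $\pm/\mp$ signs among the pattern of $(\epsilon_i)$, the branch of the square root, and the orientation of the inequality, and a sign slip in that matching is the likeliest source of error.
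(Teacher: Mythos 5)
Your proposal is correct and takes essentially the same route as the paper: the theorem is obtained by specializing Proposition~\ref{prop:Basic_Monopoles_SU(3)} to its pHYM subcase $\Phi_1=\Phi_2=0$ (which forces $\epsilon_j\epsilon_k=-1$ for the complementary pair), normalizing to $(\epsilon_1,\epsilon_2,\epsilon_3)=(1,1,-1)$ via the Weyl group, and invoking Remark~\ref{rk:GaugeEquiv} for the ``at most two up to gauge'' count. One minor inaccuracy, harmless to the conclusion: for $J^i$ the weight $r_3$ is excluded not because its solution survives as a DT-instanton with $\Phi_2\neq 0$, but because the compatibility condition $\epsilon_3\epsilon_1-\epsilon_1\epsilon_2+\epsilon_3\epsilon_2=1$ evaluates to $-3$, so no irreducible invariant DT-instanton exists on $P_{r_3}$ at all --- which excludes $r_3$ from the HYM list a fortiori.
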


Suppose that one can find an invariant Hermitian structure which admits no invariant irreducible HYM connection with gauge group $\SO(3)$, then we would have that both $A_1^2 ( A_3^2 -A_2^2) \geq 2 A_2^2 A_3^2$ and $A_2^2 (A_3^2 -A_1^2) \geq 2 A_3^2 A_1^2$. Summing these two equations we obtain
$$ - 2A_1^2 A_2^2  \geq  A_2^2 A_3^2 +  A_3^2 A_1^2, $$
which is obviously impossible. Thus we conclude the following

\begin{corollary}\label{cor:pHYM_Kahler_1}
	All invariant Hermitian structures on $\mathbb{F}_2$ admit invariant, irreducible HYM connections with gauge group $\SO(3)$. 
\end{corollary}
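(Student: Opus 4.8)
Corollary \ref{cor:pHYM_Kahler_1} asserts that every invariant Hermitian structure on $\mathbb{F}_2$ admits an invariant irreducible HYM connection with gauge group $\SO(3)$.

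The plan is to argue by contradiction, using the explicit existence criteria already supplied by Theorem \ref{thm:HYM_Integrable_Flag}. First I would invoke the Weyl group symmetry to normalize the (necessarily integrable) almost complex structure of the given invariant Hermitian structure to $J = J^i$, so that $(\epsilon_1,\epsilon_2,\epsilon_3) = (1,1,-1)$. Since Theorem \ref{thm:HYM_Integrable_Flag} is phrased in precisely this gauge and every invariant Hermitian structure has integrable $J$, no generality is lost by this reduction.

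With this normalization, Theorem \ref{thm:HYM_Integrable_Flag} asserts that the only bundles carrying an invariant irreducible $\SO(3)$-HYM connection are $P_{r_1}$ and $P_{r_2}$, with existence governed respectively by the strict inequalities $A_1^2(A_3^2 - A_2^2) < 2 A_2^2 A_3^2$ and $A_2^2(A_3^2 - A_1^2) < 2 A_3^2 A_1^2$. Thus it suffices to show that for any choice of parameters $A_1,A_2,A_3 \in \mathbb{R}^+$ at least one of these two conditions must hold. I would assume the contrary, namely that both fail, yielding the pair of reversed inequalities
\begin{align*}
A_1^2(A_3^2 - A_2^2) &\geq 2 A_2^2 A_3^2, \\
A_2^2(A_3^2 - A_1^2) &\geq 2 A_3^2 A_1^2.
\end{align*}
Adding these and cancelling the $A_1^2 A_3^2 + A_2^2 A_3^2$ appearing on both sides produces $-2 A_1^2 A_2^2 \geq A_2^2 A_3^2 + A_3^2 A_1^2$, whose left side is strictly negative and right side strictly positive. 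This contradiction forces at least one existence criterion to hold, completing the argument.

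I do not expect any genuine obstacle in this corollary: all of the analytic and representation-theoretic work has been absorbed into Proposition \ref{prop:Basic_Monopoles_SU(3)} and Theorem \ref{thm:HYM_Integrable_Flag}, and the only additional input is the positivity $A_i \in \mathbb{R}^+$ built into the parametrization \ref{eq:Complex_Structures_Flag}. The remaining step is the purely elementary observation that a sum of negative quantities cannot dominate a sum of positive ones.
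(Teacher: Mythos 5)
Your proposal is correct and follows essentially the same route as the paper: the paper likewise negates the two strict inequalities of Theorem \ref{thm:HYM_Integrable_Flag} (the Weyl group normalization to $J^i$ having already been made there), sums them to obtain $-2A_1^2A_2^2 \geq A_2^2A_3^2 + A_3^2A_1^2$, and concludes by the same positivity contradiction.
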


\begin{remark}
	This result also follows as an application of the universal Hitchin-Kobayashi correspondence \cite{LT}.
\end{remark}

\begin{example}
	Consider the K\"ahler-Einstein structure from example \ref{ex:Kahler-Einstein}. Up to scaling, this is given $A_3^2=2A_1^2=2A_2^2=2A^2$ and $(\epsilon_1 ,\epsilon_2,\epsilon_3)=(1,1,- 1)$ and using these together with \ref{thm:HYM_Integrable_Flag} we immediately see that irreducible HYM connections on $P_{r_1}$ and $P_{r_2}$ exist.
\end{example}

Finally we shall now prove one last consequence of Theorem \ref{thm:HYM_Integrable_Flag}.

\begin{corollary}\label{cor:pHYM_Kahler_2}
	There is a family of invariant K\"ahler structures $\lbrace \omega_s \rbrace_{s \in I \subset \mathbb{R}}$ with the following property. There is $s_0 \in I$, such that: for $s<s_0$ the bundle has two irreducible, invariant HYM connections; these converge to the same reducible and obstructed HYM connection, as $s \rightarrow s_0$; and for $s>s_0$ there are no irreducible, invariant HYM connections.\footnote{The two irreducible HYM connections existing for $s<s_0$ are actually gauge equivalent, see remark \ref{rk:GaugeEquiv}. However, the gauge transformation exchanging them fixed the reducible HYM connection existing at $s=s_0$.}
\end{corollary}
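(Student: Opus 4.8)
The plan is to produce the family by restricting Theorem~\ref{thm:HYM_Integrable_Flag} to an explicit one-parameter path of invariant K\"ahler structures along which the slope $\mu(L_{r_1})$ changes sign, and then to read off all three regimes directly from that theorem together with Remark~\ref{rk:GaugeEquiv}. First I would fix the bundle to be $P_{r_1}$ and the integrable structure $J^i$, i.e. $(\epsilon_1,\epsilon_2,\epsilon_3)=(1,1,-1)$. For this $J$ the associated structure is K\"ahler precisely when $\omega$ is symplectic, which by \ref{eq:Sympletic_Flag} means $A_3^2=A_1^2+A_2^2$; integrability of $J^i$ upgrades ``symplectic'' to ``K\"ahler''. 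On $P_{r_1}$, Theorem~\ref{thm:HYM_Integrable_Flag} moreover asserts that the only invariant irreducible HYM connections are the two coming from the sign choice in $a=\pm\sqrt{1-A_1^2(\tfrac{1}{2A_2^2}-\tfrac{1}{2A_3^2})}$, existing exactly when $\mu(L_{r_1})<0$ (the $r_2$ solutions live on the distinct bundle $P_{r_2}$), so on $P_{r_1}$ there are always either two or zero such connections.

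Next I would take the explicit family $A_2^2=1$, $A_1^2=s$, $A_3^2=1+s$ for $s\in I=(0,\infty)$, each of which is an invariant K\"ahler structure. Using \ref{eq:Degree} with $(k,l)=(1,2)$ and these $\epsilon_i$ gives
\[
\mu(L_{r_1})=\frac{2}{3}\left(-\frac{2}{s}+1-\frac{1}{1+s}\right)=\frac{2}{3}\,\frac{s^2-2s-2}{s(1+s)}.
\]
Since $s(1+s)>0$ on $I$, the sign of $\mu(L_{r_1})$ is that of $s^2-2s-2$, which vanishes at the single point $s_0=1+\sqrt{3}\in I$, is negative for $s<s_0$, and positive for $s>s_0$; the crossing is transverse since $\tfrac{d}{ds}(s^2-2s-2)\big|_{s_0}=2\sqrt{3}\neq 0$.

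Then I would invoke Theorem~\ref{thm:HYM_Integrable_Flag} on each regime. For $s<s_0$ we have $\mu(L_{r_1})<0$, so the two sign choices give two irreducible invariant HYM connections on $P_{r_1}$; for $s>s_0$ we have $\mu(L_{r_1})>0$, so $a^2=-\tfrac34 A_1^2\mu(L_{r_1})<0$ has no real solution and there are none. As $s\nearrow s_0$ one has $\mu(L_{r_1})\to 0^-$, whence $a\to 0$ and both connections converge to the canonical reducible connection $A_{r_1}^c=r_1\otimes\tfrac{T_1}{2}$, which reduces to $\U(1)\subset\SO(3)$. The assertion that the two irreducible connections are themselves gauge equivalent, while the intertwining gauge transformation $\exp(\tfrac{\pi}{2}T_1)$ fixes this reducible limit, is exactly Remark~\ref{rk:GaugeEquiv}.

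The main obstacle is the word \emph{obstructed}: Theorem~\ref{thm:HYM_Integrable_Flag} yields existence and the limiting reducibility for free, but certifying that the $s=s_0$ connection is an obstructed point of the moduli space requires analysing the deformation complex of the pHYM equations at $A_{r_1}^c$. I expect the cleanest route to exploit the square-root bifurcation already visible above: near $s_0$ the invariant solutions satisfy $a^2\propto-\mu(L_{r_1})(s)$, a fold whose two branches meet at rate $\sqrt{s_0-s}$, which in the Kuranishi model forces a nonvanishing obstruction space $H^2$ at the reducible connection. Making this rigorous --- computing the relevant cohomology of the linearised operator at $A_{r_1}^c$ by the same Wang-theorem and Schur's-lemma representation-theoretic bookkeeping used in the proof of Proposition~\ref{prop:Basic_Monopoles_SU(3)} --- is the delicate step; everything else is a direct substitution into Theorem~\ref{thm:HYM_Integrable_Flag}.
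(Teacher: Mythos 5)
Your proposal is correct and follows essentially the same route as the paper: an explicit one-parameter family of invariant K\"ahler structures along which $\mu(L_{r_1})$ crosses zero, with the three regimes, the limit $a\to 0$ to the reducible connection $r_1\otimes\frac{T_1}{2}$, and the gauge-equivalence statement all read off from Theorem~\ref{thm:HYM_Integrable_Flag} and Remark~\ref{rk:GaugeEquiv}. The only differences are cosmetic --- the paper's family fixes $A_1,A_2$ and lets $\epsilon_1$ vary with $s$ (placing the wall at $s_0=1$), while yours fixes $(\epsilon_1,\epsilon_2,\epsilon_3)=(1,1,-1)$ and moves the metric along the symplectic constraint $A_3^2=A_1^2+A_2^2$, arguably staying closer to the paper's stated convention $\epsilon_i\in\lbrace\pm 1\rbrace$ --- and the obstructedness at $s_0$, which you honestly flag as the delicate step, is likewise not argued in the paper, which asserts only that it ``follows from a straightforward computation.''
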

\begin{proof}
	We shall explicitly construct a family $\lbrace \omega_s \rbrace_{s \in I \subset \mathbb{R}}$ explicitly. Let $\epsilon <1/10$ be positive and $I=(1-\epsilon , 1+\epsilon)$, then we set 
	$$A_1 = 1, \ A_2=\frac{1}{\sqrt{2+\sqrt{3}}}, \ A_3=s,$$
	and $\epsilon_1 = s^2 - \frac{1}{2+ \sqrt{3}}$, which is positive (and less than $1$) for $s \in I$. Then, by the proof of the first part we have that for $s< s_0=1$ there are two irreducible, invariant HYM connections on $P_{r_1}$; while for $s>s_0$ there are no invariant HYM connections with gauge group $\SO(3)$. The fact that the connections become obstructed an reducible as $s \rightarrow s_0=1$ follows from a straightforward computation.
\end{proof}

\subsection{DT-instantons}

Recall that in general the $3$-form $\Omega$ is only semibasic. However, it is basic for $\epsilon_1=\epsilon_2=\epsilon_3=  1$, which corresponds to the almost complex structure is $J^{ni}$. We shall now analyse the consequences of proposition \ref{prop:Basic_Monopoles_SU(3)} for the existence theory of DT-instantons for invariant almost Hermitian structures.

\begin{theorem}\label{thm:Monopoles_Flag_JnK}
	Equip $\mathbb{F}_2$ with an invariant almost Hermitian structure compatible with $J^{nk}$. Let $\beta$ be an integral weight and $(A, u)$ an irreducible DT-instanton on $P_{\beta} \rightarrow \mathbb{F}_2$. Then, $\beta$ is a root of $\SU(3)$ and the DT-Instanton can be written, as in proposition \ref{prop:New_Mon_Eq}, in terms of $(A, \Phi_1 ,\Phi_2)$ given by:
	\begin{itemize}
		\item If $\beta= r_1$, and $\mu(L_{r_1})<0$, i.e. $A_1^2 A_3^2 + A_1^2 A_2^2  < 2 A_2^2 A_3^2 $. In this case 
		\begin{align}\nonumber
		& \Phi_1  =  0, \\ \nonumber
		& A  =  r_1 \otimes \frac{T_1}{2} \pm \sqrt{ 1 - \frac{A_1^2}{2 A_2^2}  - \frac{A_1^2}{2 A_3^2}  } \ (\eta_1 \otimes T_2 - \theta_1 \otimes T_3),\\ \nonumber
		& \Phi_2  = - \frac{2A_1}{A_2 A_3} T_1.
		\end{align}
		
		\item If $\beta= r_2 $, and $\mu(L_{r_2})<0$, i.e. $A_2^2 A_1^2 + A_2^2 A_3^2 < 2 A_3^2 A_1^2 $. In this case 
		\begin{align}\nonumber
		& \Phi_1  =  0, \\ \nonumber
		& A  =  r_2 \otimes \frac{T_1}{2} \pm \sqrt{ 1 - \frac{A_2^2}{2 A_3^2}  - \frac{A_2^2}{2 A_1^2}  } \ (\eta_2 \otimes T_2 - \theta_2 \otimes T_3),\\ \nonumber
		& \Phi_2  = - \frac{2A_2}{A_3 A_1} T_1.
		\end{align}
		
		\item If $\beta= r_3$, and $\mu(L_{r_3})<0$, i.e. $A_3^2 A_2^2 + A_3^2 A_1^2 < 2 A_1^2 A_2^2$. In this case 
		\begin{align}\nonumber
		& \Phi_1  =  0, \\ \nonumber
		& A  =  r_3 \otimes \frac{T_1}{2} \pm \sqrt{ 1 - \frac{A_3^2}{2 A_1^2}  - \frac{A_3^2}{2 A_2^2} } \ (\eta_3 \otimes T_2 - \theta_3 \otimes T_3),\\ \nonumber
		& \Phi_2  = - \frac{2A_3}{A_1 A_2} T_1.
		\end{align}
	\end{itemize}
	Moreover, when equality, rather than strict inequality, holds in any of the above cases the corresponding DT-instanton becomes reducible and $A$ is one of the pHYM connections described in proposition \ref{prop:U(1)_pHYM_Flag}.
\end{theorem}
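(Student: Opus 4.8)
The plan is to read off this theorem as the $\epsilon_1=\epsilon_2=\epsilon_3=1$ specialization of Proposition \ref{prop:Basic_Monopoles_SU(3)}, after converting the DT-instanton $(A,u)$ into the Higgs-field triple $(A,\Phi_1,\Phi_2)$. First I would note that $J^{nk}$ is the structure with $\epsilon_1=\epsilon_2=\epsilon_3=1$, for which Example \ref{ex:nearly_Kahler_str_Flag} shows that \emph{every} compatible invariant almost Hermitian structure is half-flat; in particular $d\Omega_1=0$. Decomposing $d\Omega=i\,d\Omega_2$ into types exactly as in the proof of Proposition \ref{prop:Half_Flat_Vanishing} gives $\overline\partial\Omega=0$, so Proposition \ref{prop:New_Mon_Eq} applies. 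Since $\Omega$ descends to a nowhere-vanishing $(3,0)$-form, writing $u=\frac{i}{4}(\Phi_1+i\Phi_2)\overline\Omega$ sets up a bijection between invariant $u\in\Omega^{0,3}(X,\mathfrak{g}_P^{\mathbb{C}})$ and invariant pairs $(\Phi_1,\Phi_2)$, under which the equations \ref{eq:New_Mon1}--\ref{eq:New_Mon2} become \ref{eq:mon1}--\ref{eq:mon2} and irreducibility of $(A,u)$ means irreducibility of $A$.

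With this translation in hand I would apply Proposition \ref{prop:Basic_Monopoles_SU(3)} verbatim, taking $G=\SO(3)$. That proposition already forces $\beta$ to be one of the roots $r_1,r_2,r_3$ and pins the triple to one of its three displayed families, each carrying two side conditions: a discrete compatibility relation (for $\beta=r_1$, this is $\epsilon_1\epsilon_2-\epsilon_2\epsilon_3+\epsilon_1\epsilon_3=1$) and the degree inequality $\epsilon_i\,\mu(L_{r_i})<0$. The only genuinely new input is then to substitute $\epsilon_1=\epsilon_2=\epsilon_3=1$: the compatibility relation becomes $1-1+1=1$ and so holds automatically for every root, which is precisely why all three of $r_1,r_2,r_3$ become admissible here; the degree condition reduces to $\mu(L_{r_i})<0$ with metric form $A_i^2A_k^2+A_i^2A_j^2<2A_j^2A_k^2$; the coefficient $\frac{\epsilon_j\epsilon_k+1}{\epsilon_j\epsilon_k}$ in $\Phi_2$ collapses to $2$, yielding $\Phi_2=-\frac{2A_i}{A_jA_k}T_1$; and the $\epsilon$-ratios drop out of the square-root coefficient in $A$, giving the stated formulas. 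Converting $(A,\Phi_1,\Phi_2)$ back through $u=\frac{i}{4}(\Phi_1+i\Phi_2)\overline\Omega$ then records the DT-instanton itself.

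For the reducibility statement I would track the equality case through the same dictionary: the proof of Proposition \ref{prop:Basic_Monopoles_SU(3)} expresses the off-diagonal parameter as $a=\pm\sqrt{-3\epsilon_iA_i^2\mu(L_{r_i})/4}$, so when $\mu(L_{r_i})=0$ one gets $a=0$ and $A$ collapses to the canonical connection $r_i\otimes\frac{T_1}{2}$ on the subbundle $L_{r_i}\hookrightarrow P_{r_i}$, which is one of the pHYM connections of Proposition \ref{prop:U(1)_pHYM_Flag}. I do not expect a genuine analytic obstacle anywhere: the hard classification work is entirely contained in Proposition \ref{prop:Basic_Monopoles_SU(3)}, and the substance of the present theorem is the structural observation that its discrete compatibility condition becomes vacuous exactly at the nearly K\"ahler complex structure, so that the degree $\mu(L_{r_i})$ alone governs existence and (ir)reducibility.
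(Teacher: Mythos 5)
Your proposal is correct and follows the paper's own route essentially verbatim: the paper likewise first observes that for $J^{ni}$-compatible structures $d\Omega$ is of type $(2,2)$ (equivalently $\overline{\partial}\Omega=0$), invokes Proposition \ref{prop:New_Mon_Eq} to pass to the triple $(A,\Phi_1,\Phi_2)$, and then reads the theorem off as the $\epsilon_1=\epsilon_2=\epsilon_3=1$ specialization of Proposition \ref{prop:Basic_Monopoles_SU(3)}, including the equality/reducibility case via $a=0$. Your substitution checks (the compatibility relation becoming $1-1+1=1$ for all three roots, the coefficient $\frac{\epsilon_j\epsilon_k+1}{\epsilon_j\epsilon_k}=2$, and the simplified degree inequalities) match the paper's formulas exactly.
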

\begin{proof}
	First we note that, as remarked before, $d \omega^2=0$ for any $\SU(3)$-invariant almost Hermitian structure. Moreover, when the almost complex structure is $J^{ni}$, i.e. when $\epsilon_1=\epsilon_2=\epsilon_3= \pm 1$ we can compute that $d \Omega$ is of type $(2,2)$ and so we can apply proposition \ref{prop:New_Mon_Eq} and write the DT-instanton equations as in \ref{eq:mon1}--\ref{eq:mon2}. The computations of these equations have been performed in the proof of proposition \ref{prop:Basic_Monopoles_SU(3)} and the current theorem is simply a particular case of that result.
\end{proof}

\begin{remark}
	The fact that $\Phi_1=0$ in all the DT-instantons above is also a consequence of proposition \ref{prop:Half_Flat_Vanishing}. Indeed, a computation shows that for of the $\SU(3)$-structures above compatible with $J^{ni}$ we further have $d\Omega_1=0$ and so proposition \ref{prop:Half_Flat_Vanishing} applies.
\end{remark}

\begin{corollary}\label{cor:Monopoles_Jnk_Flag}
	For any invariant almost Hermitian structure on $\mathbb{F}_2$ compatible with $J^{ni}$ there exists at least one $\SO(3)$-bundle equipped with a DT-instanton. These DT-instantons are reducible, in which case $A$ is a pHYM connection, if and only if the almost Hermitian structure is nearly K\"ahler (up to scaling).
\end{corollary}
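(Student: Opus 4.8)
The plan is to deduce everything from the classification in Theorem \ref{thm:Monopoles_Flag_JnK}, which reduces the corollary to an elementary statement about the three slopes $\mu(L_{r_1})$, $\mu(L_{r_2})$, $\mu(L_{r_3})$. By that theorem, for a structure compatible with $J^{ni}$ an invariant DT-instanton exists on $P_{r_i}$ precisely when $\mu(L_{r_i}) \leq 0$, and it is reducible (with $A$ a pHYM connection from proposition \ref{prop:U(1)_pHYM_Flag}) exactly when equality holds. Thus the whole statement becomes a question about the signs of these three numbers, and there is no remaining analysis to do.

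First I would record the key linear identity
$$\mu(L_{r_1}) + \mu(L_{r_2}) + \mu(L_{r_3}) = 0.$$
The cleanest route is to observe that $\beta \mapsto \deg(L_{\beta}) = 2\pi \langle c_1(L_{\beta}) \cup [\omega^2], [\mathbb{F}_2] \rangle$ is linear in $\beta$, since $L_{\beta + \beta'} = L_{\beta} \otimes L_{\beta'}$ makes $c_1(L_{\beta})$ additive in $\beta$ under the isomorphism \ref{eq:Serre}, and cup product and evaluation are linear; as $r_1 + r_2 + r_3 = 0$ (because $r_3 = -(r_1 + r_2)$), the three degrees, hence the three slopes, sum to zero. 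Alternatively one simply sets $(\epsilon_1,\epsilon_2,\epsilon_3) = (1,1,1)$ in \ref{eq:Degree} and adds. The existence statement is then immediate: the identity forbids all three slopes from being strictly positive, so at least one satisfies $\mu(L_{r_i}) \leq 0$, and Theorem \ref{thm:Monopoles_Flag_JnK} furnishes a DT-instanton on the corresponding $P_{r_i}$.

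For the reducibility dichotomy I would argue as follows. If every existing DT-instanton is reducible, then every nonpositive slope actually vanishes; combined with the sum-zero identity this forces all three slopes to be zero. Writing $x = A_1^{-2}$, $y = A_2^{-2}$, $z = A_3^{-2}$, the equations $\mu(L_{r_i}) = 0$ read $-2x + y + z = 0$, $x - 2y + z = 0$, $x + y - 2z = 0$, whose only solution is $x = y = z$, i.e. $A_1 = A_2 = A_3$; by example \ref{ex:nearly_Kahler_str_Flag} this is exactly the nearly K\"ahler structure up to scaling. Conversely, if $A_1 = A_2 = A_3$ then all three slopes vanish, so every DT-instanton produced by Theorem \ref{thm:Monopoles_Flag_JnK} is reducible with $A$ a pHYM connection, consistent with corollary \ref{cor:U(1)_pHYM_Flag}.

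I do not expect any genuine obstacle here once Theorem \ref{thm:Monopoles_Flag_JnK} is available: the content is pure linear algebra driven by the identity $r_1 + r_2 + r_3 = 0$. The only point requiring care is the correct reading of the ``if and only if'' as asserting that \emph{all} existing DT-instantons are reducible. This is precisely what the argument delivers, because away from the nearly K\"ahler point the sum-zero identity guarantees at least one \emph{strictly} negative slope, and hence at least one genuinely irreducible DT-instanton, so the two sides of the equivalence match exactly.
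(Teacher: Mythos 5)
Your proof is correct and takes essentially the same route as the paper: your identity $\mu(L_{r_1})+\mu(L_{r_2})+\mu(L_{r_3})=0$ is exactly the paper's step of adding the three reversed inequalities from Theorem \ref{thm:Monopoles_Flag_JnK} to reach a contradiction, and your linear system $-2x+y+z=x-2y+z=x+y-2z=0$ forcing $A_1=A_2=A_3$ matches the paper's conclusion that all equalities hold precisely at the nearly K\"ahler structure up to scaling. Your conceptual derivation of the sum-zero identity from additivity of $\deg(L_{\beta})$ in $\beta$ together with $r_1+r_2+r_3=0$ is a pleasant repackaging, but not a genuinely different argument.
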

\begin{proof}
	Suppose there is no such DT-instanton, not even the reducible ones when equality is achieved in the strict inequalities of theorem \ref{thm:Monopoles_Flag_JnK}. Then, we must have that
	\begin{align*}
	& A_1^2 A_3^2 + A_1^2 A_2^2  > 2 A_2^2 A_3^2 , \\
	& A_2^2 A_1^2 + A_2^2 A_3^2  > 2 A_3^2 A_1^2 , \\
	& A_3^2 A_2^2 + A_3^2 A_1^2  > 2 A_1^2 A_2^2 ,
	\end{align*}
	and adding these up we arrive at a contradiction. Hence, at least one of the inequalities above is violated and either: $A_i^2 A_j^2 + A_i^2 A_k^2 < 2 A_j^2 A_k^2$ for one or two permutations $(i,j,k)$ of $(1,2,3)$, or equality is achieved in all. In the first case, theorem \ref{thm:Monopoles_Flag_JnK} yields the existence of a bundle supporting at least one DT-instanton. In the second case we must have that all the equalities hold. This implies that $A_1^2=A_2^2=A_3^2$ and so, up to scaling, the metric is equivalent to the nearly K\"ahler one.
\end{proof}

\begin{example}\label{ex:Mon_1}
	Consider the family of almost Hermitian structures $(g,J^{ni})$ with $g$ determined by $A_1=A_2=1$ and $A_3=x$. For $x=\pm 1$ this is the almost Hermitian structure compatible with the nearly K\"ahler one. In figure \ref{fig:1}, we have depicted $x$ on the horizontal axis, and in the vertical axis the value of $a$ (as in the proof of \ref{prop:Basic_Monopoles_SU(3)}). This component of the connection has the property that it is nonzero if and only if the DT instanton is irreducible, with each pair of colors corresponding to the values of $a$ for the different weights $\beta$. Analysing this figure we see that for any value of $x$, other than $x=\pm 1$ the DT-instantons are irreducible. When $x= \pm 1$, i.e. when the metric is the nearly K\"ahler one, these become obstructed and reducible to one of the Abelian pHYM connections in proposition \ref{prop:U(1)_pHYM_Flag}.
	\begin{figure}\label{fig:1}
		\centering
		\includegraphics[scale=0.45]{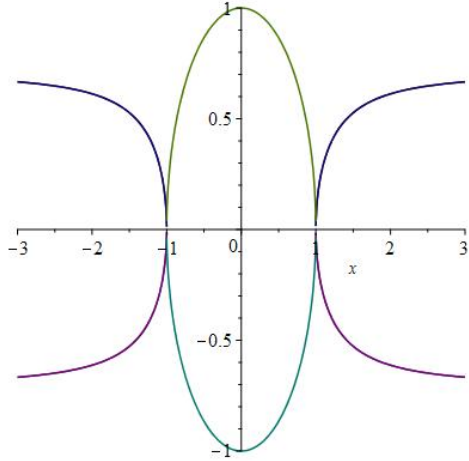}
		\caption{\label{fig} With $J^{ni}$ and $A_1=A_2=1$, $A_3=x$.}
	\end{figure}
\end{example}

\begin{example}\label{ex:Mon_2}
	Now consider the family of almost Hermitian structures $(g,J^{nk})$ with $g$ determined by $A_1=x$, $A_2=10x^3$ and $A_3=1$. In figure \ref{fig:2} we produce a similar plot as in the previous case. In this case we see that there are irreducible DT-instantons for any value of the parameter $x$.
	\begin{figure}\label{fig:2}
		\centering
		\includegraphics[scale=0.45]{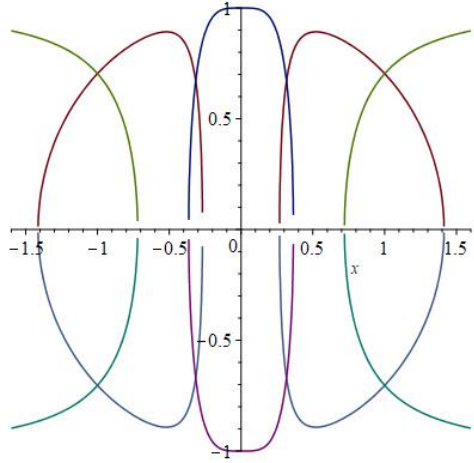}
		\caption{\label{fig2} With $J^{ni}$ and $A_1=x$, $A_2=10x^3$, $A_3=1$.}
	\end{figure}
\end{example}

\appendix

\section{The topology of the bundles $P_{\beta}$}

Recall that the bundles $P_{\beta}$ are constructed via $P_{\beta}= \SU(3) \times_{(T^2, \lambda_{\beta})} \SO(3)$, where
$$\lambda_{\beta} = \diag( e^{\frac{i}{2}\beta} , e^{-\frac{i}{2}\beta} ) \in \SU(2)/\mathbb{Z}_2 .$$
Let $V_{\beta}=P_{\beta} \times_{SO(3)} \mathbb{R}^3$ be the vector bundle associated with respect to the standard representation $SO(3)$-representation, and consider the $\U(2)$ bundle 
$$E_{\beta}=\SU(3) \times_{(T^2 , \tilde{\lambda}_{\beta} )} \mathbb{C}^2 ,$$ where
$$\tilde{\lambda}_{\beta} = \diag( e^{i\beta} , 0 ) \in  \U(2), $$
and the ismorphism. This has the property that the $\U(2)$-adjoint bundle of $E_{\beta}$ splits as $\mathfrak{u}_{E_{\beta}} \cong \underline{\mathbb{R}} \oplus V_{\beta}$ and
$$w_2(V_{\beta})= c_1(E_{\beta})  \mod 2, \ \ \ p_1(V_{\beta}) = c_1(E_{\beta})^2 - 4 c_2(E_{\beta}) . $$
We shall now compute the Chern classes of the bundles $E_{\beta}$ using Chern-Weyl theory. For this we must equip $E_{\beta}$ with a connection which we choose to be the standard invariant connection given by
$$A_{\beta}= \beta \otimes \diag(i,0).$$
This has curvature $F_{\beta} = d\beta \otimes \diag(i,0)$ and so
$$c_1(E_{\beta})=-\frac{1}{2\pi}[d \beta] , \ \ c_2(E_{\beta})= \frac{1}{4 \pi^2} [d \beta] \cup [d \beta].$$
Furthermore, a computation using the Maurer-Cartan equations shows that
$$d\beta_1^2 + d\beta_2^2 + d \beta_1 \wedge d\beta_2 = d \Im ((\eta_1+i\theta_1) \wedge (\eta_1+i\theta_1) \wedge (\eta_1+i\theta_1)),$$
and so in $H^4(\mathbb{F}_2, \mathbb{Z})$ we have
$$[d\beta_1] \cup [d \beta_2 ]= - [d\beta_1] \cup [d \beta_1 ] - [d\beta_2] \cup [d \beta_2 ].$$
So, writing $\beta = k \beta_1 + l \beta_2$ we compute that
$$w_2(V_{\beta})=-\frac{1}{2\pi}\left( k[d \beta_1] + l [d \beta_2] \right) \mod 2 ,$$
while
\begin{eqnarray}\nonumber
p_1(V_{\beta}) & = & \frac{1}{4 \pi^2} [d \beta] \cup [d \beta]  \\ \nonumber
& = & \frac{1}{4 \pi^2} \left( k^2 [d \beta_1] \cup [d \beta_1 ]  + 2kl  [d \beta_1] \cup [d \beta_2] + l^2  [d \beta_2] \cup [d \beta_2] \right) \\ \nonumber
& = & \frac{1}{4 \pi^2} \left(  k(k-2l) [d \beta_1] \cup [d \beta_1 ] + l(l-2k)  [d \beta_2] \cup [d \beta_2] \right) .
\end{eqnarray}

In particular, when $\beta$ is one of the roots $r_1$, $r_2$, $r_3$ we respectively obtain

\begin{align*}
& w_1(P_{r_1}) = -\frac{1}{2\pi} [d \beta_1]  \mod 2 , \ \ \ p_1(V_{r_1})= -\frac{3}{4 \pi^2} [d \beta_1] \cup [d \beta_1 ]  , \\ 
& w_1(P_{r_2}) =  \frac{1}{2\pi}[d \beta_2] \mod 2 , \ \ \ p_1(V_{r_2})= -\frac{3}{4 \pi^2}   [d \beta_2] \cup [d \beta_2] , \\
& w_1(P_{r_3}) = -\frac{1}{2\pi}\left( [d \beta_1] - [d \beta_2] \right) \mod 2 , \ \ \ p_1(V_{r_3})= \frac{1}{4 \pi^2} \left(  3 [d \beta_1] \cup [d \beta_1 ] + 3  [d \beta_2] \cup [d \beta_2] \right)  , 
\end{align*}
so these three bundles are all topologically different.

\bibliography{refs}

\end{document}